\newcommand{\st}{\mbox{such that}}
\newcommand{\ses}{short exact sequence}
\newcommand{\dses}{double \ses}
\newcommand{\comdia}{$3\times 3$ commutative diagram}
\newcommand{\case}[1]{\textsl{Case} {#1}.}
\newcommand{\Hmat}[2]{\left(\begin{smallmatrix}{#1}&{#2}\end{smallmatrix}\right)}
\newcommand{\Vmat}[2]{\left(\begin{smallmatrix}{#1}\\{#2}\end{smallmatrix}\right)}
\newcommand{\Smat}[4]{\left(\begin{smallmatrix}{#1}&{#2}\\{#3}&{#4}\end{smallmatrix}\right)}
\newcommand{\cat}[1]{\textbf{\ensuremath{\mathcal{#1}}}}
\newtheorem{Def}{Definition}[subsection]
\newtheorem{Thm}[Def]{Theorem}
\newtheorem{Lem}[Def]{Lemma}
\newtheorem{MnLem}[Def]{Key Lemma}
\newtheorem{Cor}[Def]{Corollary}
\newtheorem{Cons}[Def]{Construction}
\newtheoremstyle{GKWThm}        
   {0.5cm}                      
   {0 cm}                       
   {}                           
   {}                           
   {\bf}                        
   {.}                          
   {7pt}                        
   {}                           
\newcommand{\UP}{\ar@<0.5ex>}
\newcommand{\DW}{\ar@<-0.5ex>}
\newcommand{\UPON}{\ar@<0.5ex>@{->>}}
\newcommand{\DWON}{\ar@<-0.5ex>@{->>}}
\newcommand{\UPIN}{\ar@<0.5ex>@{>->}}
\newcommand{\DWIN}{\ar@<-0.5ex>@{>->}}
\newcommand{\IN}{\ar@{>->}}
\newcommand{\UPIND}{\ar@<0.5ex>@{>->}!<0pt,-11pt>;}
\newcommand{\DWIND}{\ar@<-0.5ex>@{>->}!<0pt,-11pt>;}
\newcommand{\IND}{\ar@{>->}!<0pt,-11pt>;}
\newcommand{\UPINT}{\ar@<0.5ex>@{>->}!<0pt,11pt>;}
\newcommand{\DWINT}{\ar@<-0.5ex>@{>->}!<0pt,11pt>;}
\newcommand{\INT}{\ar@{>->}!<0pt,11pt>;}
\newcommand{\UPINR}{\ar@<0.5ex>@{>->}!<11pt,0pt>;}
\newcommand{\DWINR}{\ar@<-0.5ex>@{>->}!<11pt,0pt>;}
\newcommand{\INR}{\ar@{>->}!<11pt,0pt>;}
\newcommand{\ON}{\ar@{ ->>}}
\newcommand{\TOr}[1][]{\xymatrix@1{\ar[r]^{#1}&}} 
\newcommand{\onto}[1][]{\xymatrix@1{\ar@{>>}[r]^{#1}&}} 
\newcommand{\into}[1][]{\xymatrix@1{\ar@{^{(}->}^{#1}&}} 
\newcommand{\SES}[3]{\xymatrix{0 \ar[r] & {#1} \ar[r] & {#2} \ar[r] & {#3}
    \ar[r] & 0}}  
\newcommand{\SESM}[5]{\xymatrix{0 \ar[r] & {#1} \ar[r]^{#4} & {#2} \ar[r]^{#5} & {#3}
    \ar[r] & 0}}
\newcommand{\DSES}[7]{\xymatrix{0 \ar[r] & {#1}
    \ar@<0.5ex>[r]^{#4}\ar@<-0.5ex>[r]_{#5} & {#2}
    \ar@<0.5ex>[r]^{#6}\ar@<-0.5ex>[r]_{#7} & {#3} \ar[r] &0}}
\newcommand{\DSE}[3]{\xymatrix{0 \ar[r] & {#1} \ar@<0.5ex>[r]\ar@<-0.5ex>[r]
    & {#2} \ar@<0.5ex>[r]\ar@<-0.5ex>[r] & {#3} \ar[r] &0}}
\newcommand{\Comdia}{\xymatrix{
       a'\UP[r]\DW[r] & a \UP[r]\DW[r] & a''\\
       b'\UP[r]\DW[r]\UP[u]\DW[u] & b\UP[r]\DW[r]\UP[u]\DW[u]& b''\UP[u]\DW[u]\\
       c'\UP[r]\DW[r]\UP[u]\DW[u] & c\UP[r]\DW[r]\UP[u]\DW[u]& c''\UP[u]\DW[u]
       }}
\newcommand{\Phicons}{$\phi$-construction}
\newcommand{\DSESone}[7]{\xymatrix{0 \ar[r] & *+[Fo:<10pt>]{#1}
    \ar@<0.5ex>[r]^{#4}\ar@<-0.5ex>[r]_{#5} & {#2}
    \ar@<0.5ex>[r]^{#6}\ar@<-0.5ex>[r]_{#7} & {#3} \ar[r] &0}}
\newcommand{\DSEStwo}[7]{\xymatrix{0 \ar[r] & {#1}
    \ar@<0.5ex>[r]^{#4}\ar@<-0.5ex>[r]_{#5} & {#2}
    \ar@<0.5ex>[r]^{#6}\ar@<-0.5ex>[r]_{#7} & {#3} \ar[r] &0
    \save {"1,2"+<0pt,10pt>} \PATH ~={**\dir{-}}
                           '"1,3"+<0pt,10pt>
                           `r_d
                           `d_l
                           '"1,2"+<0pt,-10pt>
                           `l_u
                           `u_r
     \restore}}
\newcommand{\DSEStwol}[7]{\xymatrix{0 \ar[r] & {#1}
    \ar@<0.5ex>[r]^{#4}\ar@<-0.5ex>[r]_{#5} & {#2}
    \ar@<0.5ex>[r]^{#6}\ar@<-0.5ex>[r]_{#7} & {#3} \ar[r] &0
    \save {"1,2"+<0pt,10pt>} \PATH ~={**\dir{-}}
                           '"1,3"+<8pt,10pt>
                           `r_d
                           `d_l
                           '"1,2"+<0pt,-10pt>
                           `l_u
                           `u_r
     \restore}}
\newcommand{\DSEStwovar}[7]{\xymatrix{0 \ar[r] & {#1}
    \ar@<0.5ex>[r]^{#4}\ar@<-0.5ex>[r]_{#5} & {#2}
    \ar@<0.5ex>[r]^{#6}\ar@<-0.5ex>[r]_{#7} & {#3} \ar[r] &0
    \save {"1,2"+<0pt,10pt>} \PATH ~={**\dir{-}}
                           '"1,3"+<10pt,10pt>
                           `r_d
                           `d_l
                           '"1,2"+<0pt,-10pt>
                           `l_u
                           `u_r
     \restore}}
\newcommand{\DSESthree}[7]{\xymatrix{0 \ar[r] & {#1}
    \ar@<0.5ex>[r]^{#4}\ar@<-0.5ex>[r]_{#5} & {#2}
    \ar@<0.5ex>[r]^{#6}\ar@<-0.5ex>[r]_{#7} & {#3} \ar[r]
    &0  \save  {"1,2"+<0pt,10pt>} \PATH ~={**\dir{-}}
                           '"1,4"+<0pt,10pt>
                           `r_d
                           `d_l
                           '"1,2"+<0pt,-10pt>
                           `l_u
                           `u_r
    \restore
}}
\newcommand{\SmallRectLeftTopVerFrame}{\save "1,1"+<10pt,0pt> \PATH ~={**\dir{-}}
                  '"2,1"+<10pt,0pt>
                  `d_l
                  `l_u
                  '"1,1"+<-10pt,0pt>
                  `u_r
                  `r_d
              \restore
    }
\newcommand{\SmallRectRightBottomHorFrame}{\save "3,2"+<0pt,10pt> \PATH ~={**\dir{-}}
                  '"3,3"+<0pt,10pt>
                  `r_d
                  `d_l
                  '"3,2"+<0pt,-10pt>
                  `l_u
                  `u_r
              \restore
    }
\newcommand{\Lframe}{\save {"3,3"+<10pt,0pt>}\PATH
    ~={**\dir{-}} `u"3,1" '"3,1"+<20pt,10pt> `l"1,1"+<10pt,0pt>
    '"1,1"+<10pt,0pt> `u^l `l^d '"3,1"+<-10pt,0pt>
    `d^r '"3,3"+<0pt,-10pt> `r^u
    \restore}
\newcommand{\SmallLBLframe}{\save {"3,2"+<10pt,0pt>}\PATH
    ~={**\dir{-}} `u"3,1" '"3,1"+<20pt,10pt> `l"2,1"+<10pt,0pt>
    '"2,1"+<10pt,0pt> `u^l `l^d '"3,1"+<-10pt,0pt>
    `d^r '"3,2"+<0pt,-10pt> `r^u
    \restore}
\newcommand{\SmallRBLframe}{\save {"3,2"+<0pt,10pt>}\PATH
    ~={**\dir{-}} '"3,3"+<-20pt,10pt>
                  `r^u
                  '"2,3"+<-10pt,0pt>
                  `u_r
                  `r_d
                  '"3,3"+<10pt,0pt>
                  `d_l
                  '"3,2"+<0pt,-10pt>
                  `l_u
                  `u_r
    \restore}
\newcommand{\MedChairframe}{\save {"3,3"+<10pt,0pt>}\PATH
    ~={**\dir{-}} '"2,3"+<10pt,0pt>
                  `u^l
                  '"2,1"+<20pt,10pt>
                  `l_u
                  '"1,1"+<10pt,0pt>
                  `u^l
                  `l^d
                  '"3,1"+<-10pt,0pt>
                  `d^r
                  '"3,3"+<0pt,-10pt>
                  `r^u
  \restore}
\newcommand{\Chairframe}{\save {"3,3"+<10pt,0pt>}\PATH
    ~={**\dir{-}} '"2,3"+<10pt,0pt>
                  `u^l
                  '"2,2"+<20pt,10pt>
                  `l_u
                  '"1,2"+<10pt,0pt>
                  `u^l
                  '"1,1"+<0pt,10pt>
                  `l^d
                  '"3,1"+<-10pt,0pt>
                  `d^r
                  '"3,3"+<0pt,-10pt>
                  `r^u
  \restore}
\newcommand{\SmallSquareLeftBottomFrame}{\save {"3,2"+<10pt,0pt>}\PATH
    ~={**\dir{-}} '"2,2"+<10pt,0pt>
                  `u^l
                  '"2,1"+<0pt,10pt>
                  `l^d
                  '"3,1"+<-10pt,0pt>
                  `d^r
                  '"3,2"+<0pt,-10pt>
                  `r^u
  \restore}
\newcommand{\SquareFrame}{\save {"3,3"+<10pt,0pt>}\PATH
    ~={**\dir{-}} '"1,3"+<10pt,0pt>
                  `u^l
                  '"1,1"+<0pt,10pt>
                  `l^d
                  '"3,1"+<-10pt,0pt>
                  `d^r
                  '"3,3"+<0pt,-10pt>
                  `r^u
  \restore}
\newcommand{\obj}[1]{#1}
\newcommand{\s}[1]{\mbox{#1}}
\newcommand{\eqcl}[1]{\ensuremath{<{#1}>}}
\newcommand{\secondcomparecd}{\xymatrix{
    k \ar@{=}[r]
        & k\\
    \tilde{k} \ar[u]^{-\tau} \ar@<0.5ex>[r]^{\eta_i} \ar@<-0.5ex>[r]
        &k\oplus(a'\oplus\tilde{p}) \ar[u]^{p_l}
        \ar@<0.5ex>[r]^{\mu_i} \ar@<-0.5ex>[r]
            &a'\oplus p\ar@{=}[d]\\
    q \ar[u]^{\omega} \ar[r]^{0\oplus\nu}&
        a'\oplus\tilde{p} \ar[u]^{i_r} \ar[r]^{1\oplus\chi}
            & a'\oplus p \\
    \save{"1,2"+<0pt,10pt>}\PATH~={**\dir{-}}
          '"1,1"+<0pt,10pt>
          `l^d
          '"3,1"+<-10pt,0pt>
          `d^r
          `r^u
          '"1,1"+<10pt,-20pt>
          `u_r
          '"1,2"+<0pt,-10pt>
          `r^u
          `u^l
    \restore
    }
}
\newcommand{\PHIcd}[5]{
  \xymatrix{
    {#1}\ar@<0.5ex>[r]\ar@<-0.5ex>[r]
        & {#2}\ar@<0.5ex>[r]\ar@<-0.5ex>[r]
          & {#3}\\
    {#1} \ar@{=}[u] \ar[r]
        & {#1}\oplus{#4} \ar@<0.5ex>[u]\ar@<-0.5ex>[u]\ar[r]
          & {#4} \ar[u]\\
        & {#5}\ar@<0.5ex>[u]\ar@<-0.5ex>[u]\ar@{=}[r]
          & {#5} \ar[u]\\
  }
}
\newcommand{\phiconstr}[1][]{\xymatrix{
    \obj{a'}\ar@<0.5ex>[r]^{f_i}\ar@<-0.5ex>[r]&
            \obj{a}\ar@<0.5ex>[r]^{g_i}\ar@<-0.5ex>[r]&
                 \obj{a''}\\\
    \obj{a'}\ar[r]^{i_l}\ar@{=}[u] &
            \obj{a'}\oplus\obj{p_{#1}}\ar[r]^{p_r}\ar@<0.5ex>[u]^{\beta_i}\ar@<-0.5ex>[u]
                & \obj{p}_{#1}\ar[u]^{\rho}\\
    &
            \obj{k}_{#1}\ar@{=}[r]\ar@<-0.5ex>[u]\ar@<0.5ex>[u]^{\alpha_i} &
         \obj{k}_{#1}\ar[u]^{\sigma} }
} 
\newcommand{\phiconstrtilde}[1][]{\xymatrix{
    \obj{a'}\ar@<0.5ex>[r]^{f_i}\ar@<-0.5ex>[r]&
            \obj{a}\ar@<0.5ex>[r]^{g_i}\ar@<-0.5ex>[r]&
                 \obj{a''}\\
    \obj{a'}\ar[r]^{i_l}\ar@{=}[u] &
            \obj{a'}\oplus\obj{\tilde{p}_{#1}}\ar[r]^{p_r}\ar@<0.5ex>[u]^{\tilde{\beta}_i}\ar@<-0.5ex>[u]
                & \obj{\tilde{p}}_{#1}\ar[u]^{\tilde{\rho}} \\
    &
            \obj{\tilde{k}}_{#1}\ar@{=}[r]\ar@<-0.5ex>[u]\ar@<0.5ex>[u]^{\tilde{\alpha}_i} &
         \obj{\tilde{k}}_{#1}\ar[u]^{\tilde{\sigma}} }
} 
\newcommand{\phiconstrcompare}{\xymatrix{
    k\ar@<0.5ex>[r]^{\alpha_i}\ar@<-0.5ex>[r]&
              \obj{a'}\oplus\obj{p}\ar@<0.5ex>[r]^{\beta_i}\ar@<-0.5ex>[r]&
                   \obj{a}\\
    \obj{k}\ar[r]^{i_l}\ar@{=}[u] &
               \obj{k}\oplus(\obj{a'}\oplus\obj{\tilde{p}})\ar[r]^{p_r}\ar@<0.5ex>[u]^{\mu_i}\ar@<-0.5ex>[u]
                    & \obj{a'}\oplus\obj{\tilde{p}}\ar@<-0.5ex>[u]
                    \ar@<0.5ex>[u]^{\tilde{\beta}_ i}\\
    &\obj{\tilde{k}}\ar@{=}[r]\ar@<-0.5ex>[u]\ar@<0.5ex>[u]^{\eta_i} &
                    \obj{\tilde{k}}\ar@<-0.5ex>[u]\ar@<0.5ex>[u]^{\tilde{\alpha}_i}\\
    \Lframe
    }
} 
\begin{document}

\title{An Algebraic Proof of \\
 Quillen's Resolution Theorem for $K_1$}

\author{Ben Whale\footnote{ben.whale@anu.edu.au}
  \footnote{Centre for Gravitational Physics,
  Department of Physics,
  Faculty of Science,
  The Australian National University,
  Canberra,
  ACT 0200,
  Australia
  }}

\maketitle

\begin{abstract}
  In his 1973 paper \cite{quillen} Quillen proved a resolution theorem for the
  $K$-Theory of an exact category; his proof was
  homotopic in nature. By using the main result of Nenashev's paper
  \cite{nenashevonetoone}, we are able to give an
  algebraic proof of Quillen's Resolution Theorem for $K_1$ of an
  exact category.  This represents an advance towards the goal of
  giving an essentially algebraic subject an algebraic
  foundation.
  
  \emph{Mathematics Subject Classification:} 18F25, 19B99

  \emph{Keywords:} Resolution Theorem, $K$-Theory, Exact Category
\end{abstract}

\section*{Introduction}
This paper presents an algebraic proof of Quillen's Resolution
Theorem of $K_1$ of an exact category.  The original proof of
Quillen's Resolution Theorem, \cite{quillen} uses homotopic
techniques. This research was done with the eventual aim of giving
algebraic proofs of most of the major homotopic results in the
area of $K$-Theory of exact categories. This would provide a new,
and hopefully insightful way, to work in the field.

The paper is divided into three sections. The first reviews the
results necessary for this work from Nenashev's three papers,
\cite{nenashevonto,nenashevall} and \cite{nenashevonetoone} as
well as making an important observation about the proof of his
main result. The second section presents Quillen's Resolution
Theorem and gives the algebraic proof for the case of $K_1$ of an
exact category.  We leave the proof of a lemma to the third and
last section as it is long, computational and distracts from the
main result.

This result was produced as the research component of a
BSc(Hons) under the supervision of Amnon Neeman. The research was
done at the Mathematical Sciences Institute of the ANU. Funding was
provided by the ANU in the form of an ANU Honours Scholarship.
Thank you to Amnon for his suggestions and support, with which
this research would not have been completed.

\section{Nenashev and $K_1$ of an Exact Category}\label{Nenashev.cha}
  Between 1996 and 1998 Nenashev published three papers
\cite{nenashevonto,nenashevall} and \cite{nenashevonetoone}. He
showed how it was possible to construct a group $D(\cat{M})$ from
an exact category $\cat{M}$ such that there was an isomorphism
$\xymatrix@1{D(\cat{M})\ar[r]^m & K_1(\cat{M})}$.\
We begin by giving an overview of the material that we shall need
from these papers.

\subsection{Definitions}

\begin{Def}
  Let $\cat{M}$ be an exact category.
  A \dses\ in $\cat{M}$, is two \ses s on the same objects, $a',a,a''\in\cat{M}$. That is, a \dses,
\[
  \DSES{a'}{a}{a''}{f_i}{}{g_i}{}
\]
 where $i=1,2$, is really two \ses s,
\[
  \SESM{a'}{a}{a''}{f_1}{g_1}
\]
and
\[
  \SESM{a'}{a}{a''}{f_2}{g_2}.
\]
\end{Def}

\begin{Def}
  A \comdia\ is a diagram
\[
  \xymatrix{
    &0&0&0\\
    0\ar[r]&
      a'\UP[r]^{f_i^a}\DW[r]\ar[u]
        & a \UP[r]^{g_i^a}\DW[r]\ar[u]
          & a''\ar[u]\ar[r] &0\\
    0\ar[r]&
      b'\UP[r]^{f_i^b}\DW[r]\UP[u]^{\beta'_i}\DW[u]
       & b\UP[r]^{g_i^b}\DW[r]\UP[u]^{\beta_i}\DW[u] & b''\UP[u]^{\beta''_i}\DW[u]\ar[r] &0\\
    0\ar[r]&
      c'\UP[r]^{f_i^c}\DW[r]\UP[u]^{\alpha'_i}\DW[u]
         & c\UP[r]^{g_i^c}\DW[r]\UP[u]^{\alpha_i}\DW[u]& c''\UP[u]^{\alpha''_i}\DW[u]\ar[r] &0\\
    &0\ar[u]&0\ar[u]&0\ar[u]
    }
\]
where each row and column is a \dses\ such that morphisms of the same subscript commute. Where no
subscript is given we shall assume that right maps commute with top
maps and left maps with bottom maps. When writing a \comdia\ we shall leave out the $0$ objects.
\end{Def}

\begin{Def}
Given an exact category $\cat{M}$, let $D(\cat{M})$ be the group with
generators \eqcl{d} for all \dses s $d$ in ${\cat{M}}$, and with the relations,
\begin{enumerate}
  \item Given $\eqcl{d} = \eqcl{\DSES{a'}{a}{a''}{f_i}{}{g_i}{}}$
        if $f_1=f_2$ and $g_1=g_2$ then $\eqcl{d}=0$.\label{rel1}
  \item If there exists a \comdia,\label{rel2}
        \[
          \Comdia
        \]
        where,
        \begin{gather*}
          \eqcl{H_T}=\eqcl{\DSE{a'}{a}{a''}}\\
          \eqcl{H_M}=\eqcl{\DSE{b'}{b}{b''}}\\
          \eqcl{H_B}=\eqcl{\DSE{c'}{c}{c''}}\\
          \eqcl{V_L}=\eqcl{\DSE{c'}{b'}{a'}}\\
          \eqcl{V_M}=\eqcl{\DSE{c}{b}{a}}\\
          \eqcl{V_R}=\eqcl{\DSE{c''}{b''}{a''}}
        \end{gather*}
        then we have the relation,
        \[
          \eqcl{H_T}-\eqcl{H_M}+\eqcl{H_B}=\eqcl{V_L}-\eqcl{V_M}+\eqcl{V_R}
        \]
        in $D(\cat{M})$.
\end{enumerate}
\end{Def}

\subsection{A Result about a Double Short Exact Sequence in $D(\cat{M})$}

This result was first proved by Nenashev in \cite{nenashevonetoone}.
We will use it implicitly throughout the rest of this paper.

\begin{Lem}\label{nen.lem.pl}
  Consider the \dses\
  \[
    d = \xymatrix{
        0\ar[r]&a\UP[r]^-{i_r}\DW[r]_-{i_l}
          &a\oplus a\UP[r]^-{-p_l}\DW[r]_-{p_r}
            & a\ar[r]
              &0
      }
  \]
  then $\eqcl{d}=0$ in $D(\cat{M})$.
\end{Lem}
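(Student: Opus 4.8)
\emph{Proof plan.} The plan is to exhibit a single \comdia\ in which $d$ occupies the middle column, while each of the other five double short exact sequences has its two constituent short exact sequences equal to each other. Relation \ref{rel1} then makes those five zero in $D(\cat{M})$, and relation \ref{rel2}, applied to this diagram, reads $0-0+0=0-\eqcl{d}+0$, forcing $\eqcl{d}=0$.

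Concretely I would take the nine objects to be
\[
  \begin{array}{ccc}
    a & a & 0\\
    a & a\oplus a & a\\
    0 & a & a
  \end{array}
\]
with the middle column the given double short exact sequence $d$; the middle row the split sequence $0\to a\xrightarrow{\Vmat{1}{-1}}a\oplus a\xrightarrow{\Hmat{1}{1}}a\to 0$; the top row the sequence $0\to a\xrightarrow{-1}a\to 0$; and the left column, the right column and the bottom row the evident split sequences built from identity maps. For each of the five slots other than the middle column, both layers of the double short exact sequence are declared to be the one sequence just named, so relation \ref{rel1} applies to it.

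The work, and the only delicate point, is checking that these data really do form a \comdia. Every constituent short exact sequence appearing is split, hence admissible in $\cat{M}$ (for instance the middle row splits, $\Hmat{1}{1}$ having section $\Vmat{1}{0}$), so what is left is the commutativity of the four small squares for each subscript $i=1,2$. The two squares meeting the corner zero objects commute trivially. The lower-right square reduces to $\Hmat{1}{1}\,i_r=\Hmat{1}{1}\,i_l=1$, which holds. The upper-left square reduces to $(-1)\circ 1=(-p_l)\circ\Vmat{1}{-1}=p_r\circ\Vmat{1}{-1}$, and both right-hand expressions equal $-1$; it is exactly this sign --- inherited from the $-p_l$ in the statement of $d$ --- that dictates the choice of $-1$, rather than $1$, along the top row. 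So the anticipated obstacle is purely sign bookkeeping: one must place the single sign in the one slot for which all four squares commute simultaneously for $i=1$ and $i=2$. With that done, relations \ref{rel1} and \ref{rel2} give the result at once.
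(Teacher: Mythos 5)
Your proof is correct and is essentially the argument the paper relies on (following Nenashev): a single \comdia\ with $d$ as the middle column and the five remaining double short exact sequences degenerate, so that relation 1 kills them and relation 2 forces $\eqcl{d}=0$; this is exactly the $[V_M]=0$ diagram appearing in Case 3 of Definition \ref{def.T}. The only cosmetic difference is where the sign lives --- you put $-1$ on the top row with $\Vmat{1}{-1}$ in the middle row, while the paper's diagram keeps the identity on top and uses $\Vmat{-1}{1}$ in the middle row --- and both choices make all four squares commute for $i=1,2$.
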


\subsection{Nenashev's Isomorphism}

This theorem is the foundation for our proof of Quillen's Resolution
Theorem as it allows us to work with the elements of $K_1$ algebraically.

\begin{Thm}\label{thm.nenashev}
  For any exact category $\cat{M}$, there exists an isomorphism
  $$\xymatrix@1{m:D(\cat{M})\ar[r] & K_1(\cat{M})}$$
\end{Thm}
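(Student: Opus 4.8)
This is the main theorem of Nenashev's \cite{nenashevonetoone}, so in the paper proper the right thing is to cite it; what follows is the shape of the argument I would give if I had to reconstruct the proof. The plan is to fix a combinatorial model for $K_1(\cat{M})$ in which double short exact sequences appear on the nose — the Gillet--Grayson construction $G_\bullet\cat{M}$, a simplicial set with $\pi_0|G_\bullet\cat{M}| = K_0(\cat{M})$ and $\pi_1(|G_\bullet\cat{M}|,0) = K_1(\cat{M})$. A \dses\ $d$ on objects $a',a,a''$ determines a closed edge-path $\ell(d)$ based at $0$: one travels out along the first \ses\ of $d$ and back along the second. I would then define $m$ on generators by $m(\eqcl{d}) = [\ell(d)]$.

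The first thing to check is that $m$ descends to the quotient group $D(\cat{M})$, i.e.\ that relations~\ref{rel1} and \ref{rel2} are killed. Relation~\ref{rel1} is immediate: if $f_1=f_2$ and $g_1=g_2$ the outgoing and returning paths coincide, so $\ell(d)$ is null-homotopic. For relation~\ref{rel2} I would use the given \comdia\ to build a $2$-dimensional subcomplex of $G_\bullet\cat{M}$ — assembled from the $2$-simplices attached to the nine short exact sequences occurring in the diagram, glued along their shared edges — whose boundary path spells out the word $\ell(H_T)\,\ell(H_M)^{-1}\,\ell(H_B)\,\ell(V_L)\,\ell(V_M)^{-1}\,\ell(V_R)^{-1}$, up to reparametrisation and the fact that $\pi_1=K_1$ is abelian. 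Such a path bounds a disc, so that word is trivial, which is exactly the image of the defining relation~\ref{rel2}. Hence $m$ is a well-defined homomorphism.

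It then remains to show $m$ is bijective. Surjectivity is essentially Nenashev's earlier results \cite{nenashevonto,nenashevall}: after contracting a maximal tree in the $1$-skeleton, $\pi_1$ is generated by edges indexed by short exact sequences, and every loop can be homotoped into the standard "out-and-back" shape, i.e.\ into the image of $m$. Injectivity is the real obstacle: one must prove that relations~\ref{rel1} and \ref{rel2} already generate \emph{every} relation among these loops, equivalently that each $2$-simplex of $G_\bullet\cat{M}$ contributes only a consequence of~\ref{rel1} and \ref{rel2}. The way I would organise this is to write down the edge-path presentation of $\pi_1(|G_\bullet\cat{M}|)$ by generators (edges) and relators ($2$-simplices), define a candidate inverse by sending each generating edge to a canonically associated class in $D(\cat{M})$, and then verify compatibility relator-by-relator, feeding suitably degenerate $3\times3$ diagrams into relation~\ref{rel2} and invoking Lemma~\ref{nen.lem.pl} to dispatch the direct-sum and split cases. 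This last verification is long and purely computational, and it is where all the real work lives — I expect it to be the main difficulty, exactly as it is in \cite{nenashevonetoone}.
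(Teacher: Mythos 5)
Your proposal takes the same approach as the paper, which proves nothing itself and simply defers to \cite{nenashevonto,nenashevall} and \cite{nenashevonetoone}; you correctly identify that a citation is all that is called for here. Your additional sketch --- the loop $\ell(d)$ in the Gillet--Grayson model $G_\bullet\cat{M}$, well-definedness via relations \ref{rel1} and \ref{rel2}, surjectivity from the earlier papers, and injectivity by constructing an inverse and checking it relator-by-relator --- is a faithful outline of how Nenashev actually argues, and is consistent with the paper's later remark (subsection \ref{subsec.gen}) that only a few specific \comdia s are used in building that inverse.
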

\begin{proof}
 Refer to the papers \cite{nenashevonto,nenashevall} and \cite{nenashevonetoone}.
\end{proof}

\subsection{The Generating Relations of $D(\cat{M})$}\label{subsec.gen}

In \cite{nenashevonetoone} Nenashev constructs an inverse to the
group homomorphism $m$. When he does this he only uses a few
\comdia s, giving certain relations. We can conclude that every relation
in $D(\cat{M})$ is generated by the few \comdia s that he used.
After some careful observation the reader will find that each of relations 
in \cite{nenashevonetoone} are given by one of the three following diagrams.
  \[
    \xymatrix{
      a' \UP[r]^{f_i'}\DW[r]
        & a \UP[r]^{g_i'}\DW[r]
          & a''\\
      a' \ar@{=}[u]\UP[r]^{f_i}\DW[r]
       & a\UP[r]^{g_i}\DW[r]\UP[u]^{\alpha_i}\DW[u]
         & a''\ar@{=}[u] \\
      0 \ar[u] & 0 \ar[u] & 0 \ar[u]\\
    }\quad\quad\quad\quad\quad\quad
    \xymatrix{
        & a\ar@{=}[r]
          & a\\
      b' \ar[r]^{f^b}
        & b \ar[r]^{g^b}\UP[u]^{\beta_i}\DW[u]
          & b''\UP[u]^{\beta_i''}\DW[u]\\
      b' \UP[u]^{\alpha_i'}\DW[u]\ar[r]^{f^c}
        & c \UP[u]^{\alpha_i}\DW[u]\ar[r]^{g^c}
          & c''\UP[u]^{\alpha_i''}\DW[u]
    }
  \]
and
  \[
    \xymatrix@C=1.3cm@R=1.1cm{
     a \UP[r]^{f_2}\DW[r]_{f_1}
       &  b \UP[r]^{g_2}\DW[r]_{g_1}
         &c \\
     a\oplus a \UP[u]^{-p_l}\DW[u]_{p_r}\UP[r]^{f_1\oplus f_2}
     \DW[r]_{f_1\oplus f_2}
       & b\oplus b \UP[u]^{-p_l}\DW[u]_{p_r} \UP[r]^{g_1\oplus g_2}
       \DW[r]_{g_1\oplus g_2}
         & c\oplus c \UP[u]^{-p_l}\DW[u]_{p_r} \\
     a \UP[r]^{f_1}\DW[r]_{f_2}\UP[u]^{i_r}\DW[u]_{i_l}
       &  b \UP[r]^{g_1}\DW[r]_{g_2}\UP[u]^{i_r}\DW[u]_{i_l}
         &c \UP[u]^{i_r}\DW[u]_{i_l}\\
    }
  \]

This observation shall
be important in our proof of the Resolution Theorem.

\section{The Proof of Quillen's Resolution Theorem for $K_1$ using
Nenashev's Isomorphism}\label{proof.cha}
  We shall prove Quillen's Resolution Theorem for $K_1$ of an exact
category using the algebraic `description' of $K_1$ given by
theorem \ref{thm.nenashev}. Specifically, we shall prove the
following theorem:

\begin{Thm}[Quillen's Resolution Theorem for $K_1$]\label{thm.resolution}
Let \s{\cat{M}} be an exact category and \s{\cat{F}} a full subcategory of \s{\cat{M}}
such that for all \ses s \[\SES{a'}{a}{a''},\]
\begin{enumerate}
  \item If $a',a''\in{\cat{F}}$ then
        $a\in{\cat{F},}$\label{prop3}
  \item If $a\in\s{\cat{F}}$ then
        $a'\in\s{\cat{F},}$\label{prop1}
  \item For any $a'' \in \s{\cat{M}}$ there exists a \ses s, as above, so that
        $a \in \s{\cat{F}.}$ \label{prop2}
\end{enumerate}
Then the inclusion functor $\xymatrix@1{\cat{F}\ar@{^{(}->}!<9pt,0pt>;[r]&\cat{M}}$ induces an
isomorphism
$$\xymatrix{i_*:K_1(\cat{F})\ar[r]&K_1(\cat{M})}.$$
\end{Thm}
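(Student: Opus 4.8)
The plan is to use Nenashev's presentation of $K_1$ via $D(\cat{M})$ (Theorem~\ref{thm.nenashev}) to reduce the homotopy-theoretic statement to an algebraic one: construct an explicit inverse to $i_*$ on the level of the groups $D(\cat{F})$ and $D(\cat{M})$. So I would first fix, for each object $a''\in\cat{M}$, a chosen \ses\ $\SES{a'}{a}{a''}$ with $a\in\cat{F}$ (possible by hypothesis~\ref{prop2}); note that then $a'\in\cat{F}$ automatically by hypothesis~\ref{prop1}, so the chosen resolution lives entirely in $\cat{F}$ except for $a''$ itself. The idea is that to a \dses\ $d=\DSES{a'}{a}{a''}{f_i}{}{g_i}{}$ in $\cat{M}$ one should assign an element $r(d)\in D(\cat{F})$ built out of chosen resolutions of $a'$, $a$, $a''$ together with the \comdia\ that relates them; using the three generating diagrams from Section~\ref{subsec.gen}, one checks that $r$ respects relations~\ref{rel1} and~\ref{rel2} and hence descends to a homomorphism $r:D(\cat{M})\to D(\cat{F})$.

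The key steps, in order, are: (i) set up the chosen-resolution assignment and verify that when $a''$ already lies in $\cat{F}$ one may take the trivial resolution $0\to 0\to a''\to a''\to 0$, so that $r$ restricted to classes coming from $D(\cat{F})$ is the identity; (ii) given a \dses\ in $\cat{M}$, use the horseshoe-type construction to build a \comdia\ whose top row is the given \dses, whose columns are the chosen resolutions of $a',a,a''$, and whose bottom and middle rows are \dses s in $\cat{F}$ --- this is where relation~\ref{rel2} is invoked to express $\eqcl{d}$ (image of the top row) in terms of classes living in $\cat{F}$; (iii) show $r$ is well defined by checking it kills both defining relations of $D(\cat{M})$, reducing each check to the three diagrams catalogued in Section~\ref{subsec.gen} via Lemma~\ref{nen.lem.pl}; (iv) transport $r$ through Nenashev's isomorphism $m$ to get a map $K_1(\cat{M})\to K_1(\cat{F})$ and verify it is a two-sided inverse of $i_*$ --- one direction is immediate from~(i), and the other follows because $i_*r$ sends $\eqcl{d}$ to a sum of classes of \dses s which, by the \comdia\ of step~(ii), equals $\eqcl{d}$ in $D(\cat{M})$.

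Two technical points deserve care. First, the horseshoe construction for exact categories does not canonically produce a resolution of the middle object $a$ from those of $a'$ and $a''$; one must choose the resolution of $a$ independently and then produce \emph{comparison} \comdia s showing the answer is independent of choices --- this independence argument, organised around the $\phi$-construction machinery whose diagrams are set up in the preamble, is the computational heart and is exactly the kind of long verification the introduction promises to defer to Section~3. Second, one must check that the resolution length plays no role: although Quillen's theorem in higher $K$-theory needs finite resolutions, for $K_1$ a single step suffices, and the argument should only ever manipulate the chosen one-step resolutions together with the identity $\SES{a}{a}{0}$ and $\SES{0}{a}{a}$.

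I expect the main obstacle to be step~(iii) combined with the choice-independence in the first technical point: showing that the assignment $d\mapsto r(d)$ is compatible with relation~\ref{rel2} requires, for a given \comdia\ in $\cat{M}$, assembling a three-dimensional array of chosen resolutions (a resolution for each of the nine objects) and then extracting from it enough \comdia s in $\cat{F}$ to match $\eqcl{H_T}-\eqcl{H_M}+\eqcl{H_B}$ with $\eqcl{V_L}-\eqcl{V_M}+\eqcl{V_R}$ after applying $r$. Keeping the various identifications coherent --- so that the $\oplus$-summands introduced by successive horseshoe steps cancel in pairs via Lemma~\ref{nen.lem.pl} --- is the delicate part, and it is precisely this bookkeeping that the deferred lemma of Section~3 is designed to encapsulate.
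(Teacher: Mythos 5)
Your overall strategy --- replace $K_1$ by Nenashev's $D(\cat{M})$, build an explicit algebraic inverse to $i_*$ by resolving into $\cat{F}$, and defer a long well-definedness verification --- matches the paper's. But the mechanism you propose in step (ii) has a genuine gap. You fix, once and for all, a one-step resolution of each object of $\cat{M}$ by objects of $\cat{F}$, and then try to assemble a \comdia\ whose top row is a given \dses\ $d$ on $a'\to a\to a''$ and whose columns are these pre-chosen resolutions. For such a \comdia\ to exist you need, for \emph{each} $i=1,2$, a map from the middle term of the resolution of $a$ to the middle term of the resolution of $a''$ lifting $g_i$ (composed with the resolution epimorphism) --- and since those middle terms are merely objects of $\cat{F}$, not projectives, such lifts need not exist; even when they do, a single choice of middle column must serve both exact structures of the \dses\ simultaneously, which the usual horseshoe construction (adapted to one short exact sequence at a time) does not provide.

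The paper's fix is to abandon per-object resolutions entirely: for each \dses\ $d$ it forms the pullback $a\times_{a''}a$ of $g_1$ against $g_2$ and covers it by some $p\in\cat{F}$ using hypothesis \ref{prop2}, obtaining maps $\eta_i:p\to a$ compatible with \emph{both} epimorphisms at once; the \Phicons\ then replaces $d$ by a \dses\ $\phi(p,d)$ on $k\to a'\oplus p\to a$ satisfying $\eqcl{d}=-\eqcl{\phi(p,d)}$, which increases by one the number of objects known to lie in $\cat{F}$. Iterating three times, through intermediate quotient groups $T_1,T_2$ interpolating between $K_1(\cat{M})$ and $K_1(\cat{F})$, yields the inverse $-\phi_3^*\phi_2^*\phi_1^*$ --- note the sign, which your construction would not produce and which must be tracked. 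This staged filtration also tames your step (iii): instead of a three-dimensional array of resolutions, each stage is checked only against the short list of generating \comdia s from Section \ref{subsec.gen}, together with a separate lemma showing independence of the choice of $p$. If you want to rescue your version, the minimal repair is to make the auxiliary object depend on the \dses\ (via the pullback) rather than on its objects and then prove choice-independence --- at which point you have essentially reconstructed the \Phicons.
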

Quillen's original proof of this theorem may be found in \cite{quillen}.

\subsubsection*{A Few Remarks}

For the rest of this section we shall assume that \cat{M} and
\cat{F} satisfy the hypotheses of the Resolution Theorem.
We shall use results from homological algebra, such as the snake
lemma, throughout the rest of this paper. This is justified by the
Gabriel-Quillen Embedding Theorems \cite[pp 399]{gabriel}.
Also, we shall draw a ring around the objects in a \comdia\
if they are known
to be in
\cat{F}.

\subsection{Definitions, Some Useful Groups and their Relationships}\label{sec.nen.def}

\begin{Def}\label{intro.def2}
We shall say that a \dses,
\[
  \DSE{a'}{a}{a''}
\]
is of type 0 if there are no restrictions on $a',a$ and $a''$, type 1 if
$a'\in{\cat{F}}$, type 2 if $a',a\in{\cat{F}}$ and of
type 3 if $a',a,a''\in{\cat{F}}$.
\end{Def}

\begin{Def}\label{intro.def4}
For all $j=0,1,2,3$ let $F_j$ be the free abelian group with generators $[d]_j$,
where $d$ is a \dses\ of type $j$. Where unambiguous we shall drop the
$j$ and write $[d]$.
\end{Def}

From definition \ref{intro.def2},
we see that we have inclusion homomorphisms between the $F_j$,
$
  \xymatrix{F_3\ar@{^{(}->}[r]^{i_3}
              &F_2\ar@{^{(}->}[r]^{i_2}
                &F_1\ar@{^{(}->}[r]^{i_1}
                  &F_0
  }
$.

\begin{Def}\label{def.T}
  For all $j=0,1,2,3$, let $T_j$ be the quotient of $F_j$ by the relation
  \[
    [\DSES{a'}{a}{a''}{f}{f}{g}{g}]_j=0
  \]
  and the relations
 given by
  the \comdia s below.

  \case{1} For $j=0$ we include all relations
  $$[V_L]-{[V_M]}+{[V_R]}={[H_T]}-{[H_M]}+{[H_B]}$$
  given by specializations of the \comdia
  \[
     \Comdia
  \]
  Thus $T_0\simeq K_1(\cat{M})$, by theorem \ref{thm.nenashev}.

  \case{2} For $j=1$ we restrict our relations to those given by
    specializations of the following \comdia s; for convenience we write
    under each \comdia\ the relation that it gives in $T_1$.
  \[
  \begin{gathered}
    \xymatrix{
      a'\ar@{=}[d]\UP[r]\DW[r]
        & a\ar@{=}[d]\UP[r]\DW[r]
          &a''\\
      a'\UP[r]\DW[r]
        &a\UP[r]\DW[r]
          &a''\UP[u]\DW[u]\\
      0\ar[u] & 0\ar[u] &0\ar[u]\\ \Lframe
    }\\
    [H_T]-[H_M]=[V_R]
  \end{gathered}\quad\quad\quad\quad
  \begin{gathered}
    \xymatrix@C=1.3cm@R=1cm{
      a'\UP[r]^{f_2}\DW[r]_{f_1}
       &a\UP[r]^{g_2}\DW[r]_{g_1}
         &a''\\
      a'\oplus a'\ar[r]^{f_1\oplus f_2}
      \DW[u]_{p_r}\UP[u]^{-p_l}
        &a\oplus a\ar[r]^{g_1\oplus g_2}
        \DW[u]_{p_r}\UP[u]^{-p_l}
          &a''\oplus a''\DW[u]_{p_r}\UP[u]^{-p_l}\\
      a'\UP[r]^{f_1}\DW[r]_{f_2}\DW[u]_{i_l}\UP[u]^{i_r}
       &a\UP[r]^{g_1}\DW[r]_{g_2}\DW[u]_{i_l}\UP[u]^{i_r}
         &a''\DW[u]_{i_l}\UP[u]^{i_r}\\
      \save {"1,1"+<-22pt,0pt>}\PATH~={**\dir{-}}
            '"3,1"+<-22pt,0pt>
            `d^r
            '"3,1"+<9pt,-10pt>
            `r^u
            '"1,1"+<19pt,0pt>
            `u^l
            '"1,1"+<-12pt,10pt>
            `l^d
      \restore
     }\\
     [H_T]+[H_B]=0
  \end{gathered}
  \]
  \newline
  \[
  \begin{gathered}
    \xymatrix{
      a'\ar[r]
        &a\ar[r]
          &a''\\
      a'\UP[u]\DW[u]\ar[r]
        &b\UP[u]\DW[u]\ar[r]
          &b''\UP[u]\DW[u]\\
        &c\UP[u]\DW[u]\ar@{=}[r]
          &c\UP[u]\DW[u]\\
      \save {"3,2"+<0pt,-10pt>}\PATH~={**\dir{-}}
            '"3,3"+<0pt,-10pt>
            `r^u
            `u^l
            '"3,2"+<0pt,10pt>
            `l^d
            `d^r
      \restore
    }\\
    [V_L]-[V_M]+[V_R]=0
  \end{gathered}\quad\quad\quad\quad
  \begin{gathered}
    \xymatrix@C=1cm{
        a'\UP[r]\DW[r]
            &a\UP[r]\DW[r]
            &a''\\
        a'\ar@{=}[u]\ar[r]^-{\Vmat{1}{0}}
            &a'\oplus p\UP[u]\DW[u]\ar[r]^(.55){\Hmat{0}{1}}
            &p\UP[u]\DW[u]\\
            &k\UP[u]\DW[u]\ar@{=}[r]
            &k\UP[u]\DW[u]\\
        \save {"3,3"+<10pt,0pt>}\PATH~={**\dir{-}}
                '"2,3"+<10pt,8pt>
                `u^l
                '"2,1"+<20pt,18pt>
                `l_u
                '"1,1"+<10pt,0pt>
                `u^l
                `l^d
                '"3,1"+<-10pt,0pt>
                `d^r
                '"3,3"+<0pt,-10pt>
                `r^u
        \restore
    }\\
    [H_T]=[V_R]-[V_M]
  \end{gathered}
  \]\newline

  \case{3} If $j=2$ we restrict our relations to those given by
    specializations of the following \comdia s; for convenience we write
    under each \comdia\ the relation that it gives in $T_2$.
  \[
  \begin{gathered}
    \xymatrix{
        & a\ar@{=}[r]
          &a\\
      c'\ar@{=}[d]\UP[r]\DW[r]
        & b\UP[u]\DW[u]\UP[r]\DW[r]
          & b''\ar[u]\\
      c'\UP[r]\DW[r]
        &c\UP[r]\DW[r]\UP[u]\DW[u]
          &c''\ar[u]\\ \SmallSquareLeftBottomFrame
    }\\
    [H_B]-[H_M]=-[V_M]
  \end{gathered}\quad\quad\quad\quad
  \begin{gathered}
    \xymatrix{
      a'\ar@{=}[d]\UP[r]\DW[r]
        & a\UP[r]\DW[r]
          &a''\ar@{=}[d]\\
      a'\UP[r]\DW[r]
        &a\UP[r]\DW[r]\UP[u]\DW[u]
          &a''\\
      0\ar[u] & 0\ar[u] &0\ar[u]\\ \SquareFrame
    }\\
    [H_T]-[H_M]=-[V_M]
  \end{gathered}\quad\quad\quad\quad
  \]\newline
  \[
  \begin{gathered}
    \xymatrix@C=1.3cm@R=1cm{
      a'\UP[r]^{f_2}\DW[r]_{f_1}
       &a\UP[r]^{g_2}\DW[r]_{g_1}
         &a''\\
      a'\oplus a'\ar[r]^{f_1\oplus f_2}
      \DW[u]_{p_r}\UP[u]^{-p_l}
        &a\oplus a\ar[r]^{g_1\oplus g_2}
        \DW[u]_{p_r}\UP[u]^{-p_l}
          &a''\oplus a''\DW[u]_{p_r}\UP[u]^{-p_l}\\
      a'\UP[r]^{f_1}\DW[r]_{f_2}\DW[u]_{i_l}\UP[u]^{i_r}
       &a\UP[r]^{g_1}\DW[r]_{g_2}\DW[u]_{i_l}\UP[u]^{i_r}
         &a''\DW[u]_{i_l}\UP[u]^{i_r}\\
      \save{"1,1"+<-22pt,5pt>}\PATH~={**\dir{-}}
           '"3,1"+<-22pt,-5pt>
           `d^r
           '"3,2"+<9pt,-15pt>
           `r^u
           '"1,2"+<19pt,5pt>
           `u^l
           '"1,1"+<-12pt,15pt>
           `l^d
      \restore
     }\\
     [H_T]+[H_B]=0
  \end{gathered}\quad\quad\quad\quad
  \begin{gathered}
    \xymatrix@C=1cm{
     a\ar@{=}[r]
       & a &\\
     a\ar[r]_-{\left(\begin{smallmatrix}-1\\1\end{smallmatrix}\right)}
     \ar@{=}[u]
       &a\oplus
       a\UP[u]^{-p_l}\DW[u]_{p_r}\ar[r]^-{\left(\begin{smallmatrix}1&1\end{smallmatrix}\right)}
         &a\ar@{=}[d]\\\
       &a\ar@{=}[r]\UP[u]^{i_r}\DW[u]_{i_l}
         &a\\ \SquareFrame
  }\\
  [V_M]=0
  \end{gathered}
  \]\newline
  \[
  \begin{gathered}
    \xymatrix@C=1cm{
      a'\UP[r]\DW[r]
        &a\UP[r]\DW[r]
          &a''\\
      a'\ar@{=}[u]\ar[r]_-{\Vmat{1}{0}}
        &a'\oplus p\UP[u]\DW[u]\ar[r]_-{\Hmat{0}{1}}
          &p\UP[u]\DW[u]\\
        &k\UP[u]\DW[u]\ar@{=}[r]
          &k\UP[u]\DW[u]\\ \Chairframe
    }\\
    [H_T]=[V_R]-[V_M]
  \end{gathered}
  \]\newline

  \case{4} If $j=3$ then we allow all relations
  $${[V_L]}-{[V_M]}+{[V_R]}={[H_T]}-{[H_M]}+{[H_B]}$$
  given by specializations of the \comdia
  \[
    \xymatrix{
    a'\UP[r]\DW[r] & a \UP[r]\DW[r] & a''\\
    b'\UP[r]\DW[r]\UP[u]\DW[u] & b\UP[r]\DW[r]\UP[u]\DW[u]& b''\UP[u]\DW[u]\\
    c'\UP[r]\DW[r]\UP[u]\DW[u] & c\UP[r]\DW[r]\UP[u]\DW[u]& c''\UP[u]\DW[u]
    \SquareFrame
    }
  \]
  Thus $T_3\simeq K_1(\cat{F})$, by theorem \ref{thm.nenashev}.

   In all cases we shall denote the equivalence class of $[d]_j$ by
   $\eqcl{d}_j$. When unambiguous we shall drop the $j$ and simply
   write $\eqcl{d}$. From the definition of $T_j$ it is clear that there exists
		surjective group homomorphisms $\xymatrix{{\theta_j}:F_j\ar[r]&T_j,}$
		for all $j=0,1,2,3$.
\end{Def}

\begin{Def}\label{intro.def3}
We shall say that a \comdia\ $D$ is of type $j$ if it is a
specialization of one
of the diagrams given in the definition of $T_j$.
\end{Def}

\subsubsection*{Lemma \ref{nen.lem.pl} revisited}

The \comdia\ used in the proof of lemma \ref{nen.lem.pl} is
of type $j$, thus we can conclude that
$
   \eqcl{\xymatrix{
        0\ar[r]&a\UP[r]^-{i_r}\DW[r]_-{i_l}
          &a\oplus a\UP[r]^-{-p_l}\DW[r]_-{p_r}
            & a\ar[r]
              &0
      }}_j=0.
$

\subsection{The Group Homomorphisms $\phi_{j}$}

In order to prove the Resolution Theorem we shall construct a
group homomorphism that is an inverse to $i_*$.  To do this we
show how we may construct homomorphisms
$\xymatrix{{\phi_{j+1}}:F_j\ar[r]&F_{j+1}}$ that induce functions
$\phi_{j+1}^*$ between $T_j$ and $T_{j+1}$.  Before we do this, we present a construction that will allow us to
define $\phi_{j+1}$.

\subsubsection*{The \Phicons}
\begin{Cons}\label{consp_d}
  For all \dses s $d$
  \[
    \DSES{a'}{a}{a''}{f_i}{}{g_i}{}
  \]
  there exists $p\in\cat{F}$ such that we have the commutative
  triangle
  \[
    \xymatrix{
      p \ON[rr]^{g_i\eta_i}\UPON[dr]\DWON[dr]_{\eta_i}&& a''\\
      & a\UPON[ur]\DWON[ur]_{g_i}
    }
  \]
\end{Cons}
\begin{proof}
We construct the pullback of the diagram
\[
  \xymatrix{ a \ON[r]^{g_1} & a'' \\ & a \ON[u]^{g_2}}
\]
to get the object $a\times_{a''} a$. By property
(\ref{prop2}) of theorem \ref{thm.resolution}, we can find $p\in\cat{F}$ and
$\xymatrix@1{{\psi:p}\ON[r]& a\times_{a''} a}$, an admissible
epimorphism. This gives us the commutative
diagram
\[
  \xymatrix{& a \ON[r]^{g_1} & a'' \\
            & a\times_{a''} a \ON[u]^{\gamma_1}\ON[r]^{\gamma_2} & a
              \ON[u]^{g_2}\\
            {p} \ON[ur]^{\psi}}
\]
Let $\eta_i=\gamma_i\psi$. Then we have the commutative triangle
\[
  \xymatrix{
      p \ON[rr]^{g_i\eta_i}\UPON[dr]^{\eta_i}\DWON[dr]&& a''\\
      & a\UPON[ur]\DWON[ur]_{g_i}
  }
\]
as required.
\end{proof}

\begin{Cons}[The \Phicons]
  Given a type $j$ \dses\ $d$
  \[
    \DSES{a'}{a}{a''}{f_i}{}{g_i}{}
  \]
  and $p\in \cat{F}$ such that we have the commutative
  triangle
  \[
    \xymatrix{
      p \ON[rr]^{g_i\eta_i}\UP[dr]\DWON[dr]_{\eta_i}&& a''\\
      & a\UPON[ur]\DWON[ur]_{g_i}
    }
  \]
  we can construct a type $j+1$ \dses, $\phi(p,d)$, such that $\eqcl{d}_j=-\eqcl{\phi(p,d)}_j$.
\end{Cons}

\begin{proof}
The commutative triangle allows us to form the \comdia\
\begin{gather*}\label{intro.dia1}
  \xymatrix@C=1cm{
    a'\DW[r]\UP[r]^{f_i}
      & a\DW[r]\UP[r]^{g_i}
        & a'' \\
    a' \ar@{=}[u]\ar[r]^-{\Vmat{1}{0}}
      & a'\oplus p \DW[u]\UP[u]^{\omega_i^{a''}}\ar[r]^-{\Hmat{0}{1}}
        & p\ar[u]^{g_i\eta_i}\\
      & k\DW[u]\UP[u]^{\nu_i^{a''}}\ar@{=}[r]
        & k \ar[u]^{\tau}\\
  }
\end{gather*}
where $\omega_i^{a''}=(f_i, \eta_i)$.
By property (\ref{prop1}) of theorem \ref{thm.resolution} we see that
$k\in \cat{F}$, thus the \dses\ $V_M$ 
is of type $j+1$. We shall denote this \dses\ by $\phi(p,d)$. Note
that the \comdia, above, is of type $j$ thus we have the relation,
$\eqcl{d}_j=-\eqcl{\phi(p,d)}_j$ as required.
\end{proof}

We shall often wish to apply construction \ref{consp_d} to a \dses\ $d$,
$
  \DSES{a'}{a}{a''}{f_i}{}{g_i}{}
$
and then use the object $p\in\cat{F}$, given by construction
\ref{consp_d}, to apply the \Phicons\ to $d$. When we do this we
shall simply say that we have formed the \Phicons\ over the
morphisms $g_i$.

\subsubsection*{The Definition of $\phi_j$}

As a result of construction \ref{consp_d}, for each \dses\ $d$ we may choose $p\in\cat{F}$ so that the
\Phicons\ may be applied to $d$ using $p$. We shall denote this $p$ by
$p_d$.

\begin{Def}
  Define $\xymatrix{{\phi_{j+1}}:F_j\ar[r] & F_{j+1}}$ by
  $\phi_{j+1}([d]_j)=[\phi(p_d,d)]_{j+1}$ then extend by
  linearity to the whole group, $F_j$.
\end{Def}

  \subsection{The Proof of The Resolution Theorem}

So far we have constructed a number of groups and group
homomorphisms so that we have the diagram,
\[
  \xymatrix@!C{
    F_3\ar@{^{(}->}[r]^{i_3}
       \ar[d]^{\theta_3}
      &F_2\ar@{^{(}->}[r]^{i_2}
          \ar[d]^{\theta_2}
          \ar@/^/[l]^{\phi_3}
        &F_1\ar@{^{(}->}[r]^{i_1}
            \ar[d]^{\theta_1}
            \ar@/^/[l]^{\phi_2}
          &F_0\ar[d]^{\theta_0}
              \ar@/^/[l]^{\phi_1}\\
    T_3\simeq K_1(F)\ar@(dl,dr)[rrr]_{i_*}
      &T_2
        &T_1
          &T_0\simeq K_1(M)
  }
\]

It is possible to see that there are a number of relations
between the homomorphisms above. We shall only need two of these
relations for our proof.

\begin{Lem}\label{proofintro.lem}
   For all $j=1,2,3$, let $\theta_j,\phi_j,i_j$ and $\theta_0$ be defined as
   above. Then we have the two equations,
   \begin{enumerate}
     \item \label{intro.item3}
           $\theta_3 \phi_3\phi_2\phi_1 i_1
           i_2 i_3=-\theta_3$,
     \item \label{intro.item6}$\theta_0 i_1 i_2 i_3
            \phi_3 \phi_2 \phi_1=-\theta_0$.
   \end{enumerate}
\end{Lem}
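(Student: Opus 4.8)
The plan is to prove both equations by the same mechanism: composing the maps $\phi$ and $i$ and tracking what happens to a single generator $[d]$, then reading off the identity in the target $T_j$. First I would unwind the definitions. For \eqref{intro.item3}, start with a generator $[d]_3 \in F_3$, where $d$ is a type-$3$ \dses\ on objects $a',a,a''\in\cat{F}$. Applying $i_1i_2i_3$ does nothing to the underlying \dses; it merely reinterprets $d$ as a type-$0$ \dses. Then $\phi_1$ replaces $[d]_0$ by $[\phi(p_d,d)]_1$, where $\phi(p_d,d)$ is the middle column $V_M$ of the \comdia\ in the \Phicons; by the \Phicons\ itself (which I may invoke as an established result) the \comdia\ is of type $0$, so in $T_0$ we have $\eqcl{d}_0 = -\eqcl{\phi(p_d,d)}_0$. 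The point of the lemma is that this relation already holds \emph{before} passing to the quotient, at the level where we land after reapplying all the $\phi$'s and $i$'s, once we compose with $\theta_3$ (resp. $\theta_0$).

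The key step is to iterate the \Phicons\ three times and observe that the three resulting \comdia s are \emph{all of type $3$} (resp. type $0$ for the second equation), so each contributes a relation in $T_3$ (resp. $T_0$). Concretely, writing $d_0 = d$, $d_1 = \phi(p_{d_0},d_0)$, $d_2 = \phi(p_{d_1},d_1)$, $d_3 = \phi(p_{d_2},d_2)$, the \Phicons\ gives \comdia s witnessing $\eqcl{d_0} = -\eqcl{d_1}$, $\eqcl{d_1} = -\eqcl{d_2}$, $\eqcl{d_2} = -\eqcl{d_3}$ in the appropriate $T_j$. I need to check that at each stage the relevant \comdia\ is a specialization of one of the diagrams listed in Case 2, 3, or 4 of Definition \ref{def.T} — this is exactly where the hypothesis that $d$ is type $3$ (all objects in $\cat{F}$), together with properties \eqref{prop1} and \eqref{prop3} of the Resolution Theorem, guarantees that all intermediate objects $k$, $p$, $a'\oplus p$ stay in $\cat{F}$, so that the \comdia\ produced by the \Phicons\ is genuinely a type-$3$ \comdia\ and the relation survives in $T_3$. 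Chaining the three relations: $\theta_3\,[d_3] = -\theta_3\,[d_2] = \theta_3\,[d_1] = -\theta_3\,[d_0] = -\theta_3[d]$, which is precisely $\theta_3\phi_3\phi_2\phi_1 i_1 i_2 i_3 [d] = -\theta_3 [d]$. Extending by linearity over generators of $F_3$ finishes \eqref{intro.item3}. For \eqref{intro.item6} the argument is dual: start with a generator $[d]_0 \in F_0$, apply $i_1i_2i_3$ \emph{after} the $\phi$'s — wait, here the composite is $\theta_0 i_1 i_2 i_3 \phi_3 \phi_2 \phi_1$, so one applies $\phi_1,\phi_2,\phi_3$ first (landing in $F_3$) then includes back down into $F_0$ and quotients; the same three \Phicons\ \comdia s each give a relation in $T_0$ (they are type $0$ trivially, since $T_0$ has all relations), and the same three-fold chaining yields $-\theta_0[d]$.

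The main obstacle I anticipate is not the algebra of signs — that is just three applications of $\eqcl{d} = -\eqcl{\phi(p,d)}$ — but the bookkeeping needed to confirm that each iterated \Phicons\ \comdia\ is \emph{literally} one of the listed generating diagrams (in particular the one labelled $[H_T] = [V_R] - [V_M]$ in Cases 2 and 3, which is the shape the \Phicons\ produces). One has to be careful that when $d$ is of type $3$, the \dses\ $\phi(p_d,d)$ really is of type $3$ and the \comdia\ of type $3$, so that applying $\theta_3$ to the relation is legitimate; and symmetrically that the type-$1$, type-$2$ intermediate stages interpolate correctly when we reinterpret via the inclusions $i_j$. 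A subtlety worth flagging: the equations only hold \emph{after} composing with $\theta_j$ (they are false at the level of $F_j$), because the relation $\eqcl{d} = -\eqcl{\phi(p,d)}$ is a relation in the quotient, not in the free group — so the proof must keep $\theta_3$ (resp. $\theta_0$) attached throughout and never claim an identity of free generators. Once this is set up carefully, each equation is a three-line computation.
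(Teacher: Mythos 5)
Your proposal is correct and is exactly the paper's argument: the paper's proof is the single line ``follows from the fact that $\eqcl{\phi(p_d,d)}_{j}=-\eqcl{d}_j$,'' and you have simply unwound that into the three-fold chain of sign flips, together with the (genuinely necessary) check that each iterated \Phicons\ \comdia\ is of type $3$ (resp.\ type $0$) so that the relation survives composition with $\theta_3$ (resp.\ $\theta_0$). Your remark that the identities hold only after applying $\theta_j$, not in the free groups $F_j$, is also the right caveat.
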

\begin{proof}
   The proof follows from the fact that $\eqcl{\phi(p_d,d)}_{j}=-\eqcl{d}_j$.
\end{proof}

The next lemma is the result that our proof rests on. The proof is
long and computational in nature, hence we leave it to the next
section.

\begin{MnLem}\label{lemmas.cor}
  The functions $\xymatrix{{\phi_{j+1}:F_j}\ar[r]&F_{j+1}}$ induce
  functions $\xymatrix{{\phi_{j+1}^*:T_j}\ar[r]&T_{j+1}}$ such that
  the diagram
\begin{gather}\label{induced.dia}
  \xymatrix@!C{
    F_3
       \ar[d]^{\theta_3}
      &F_2
          \ar[d]^{\theta_2}
          \ar[l]^{\phi_3}
        &F_1
            \ar[d]^{\theta_1}
            \ar[l]^{\phi_2}
          &F_0\ar[d]^{\theta_0}
              \ar[l]^{\phi_1}\\
    T_3\simeq K_1(F)
      &T_2\ar[l]^-{\phi_3^*}
        &T_1\ar[l]^{\phi_2^*}
          &T_0\simeq K_1(M)\ar[l]^(.6){\phi_1^*}
  }
\end{gather}
commutes.
\end{MnLem}

\begin{Def}
  Define $\xymatrix{{\varphi}:K_1(M)\ar[r] & K_1(F)}$, by $\varphi=\phi_3^*\phi_2^*\phi_1^*$.
\end{Def}

We can now prove that the Resolution Theorem follows from Lemma \ref{lemmas.cor}.

\begin{proof}[Proof of Theorem \ref{thm.resolution}]
  We observe that
    $\varphi\theta_0=\theta_3\phi_3\phi_2\phi_1$
  by the commutativity of diagram (\ref{induced.dia}) and that
    $i^*\theta_3=\theta_0i_1i_2i_3$
  trivially. Hence by lemma \ref{proofintro.lem}) we know that
  $i^*\varphi\theta_0=\theta_0i_1i_2i_3\phi_3\phi_2\phi_1=-\theta_0$ and that
  $\varphi i^*\theta_3=\theta_3\phi_3\phi_2\phi_1i_1i_2i_3=-\theta_3$.
  Therefore as $\theta_0$ and $\theta_3$ are both epimorphisms we know that
  $i^*\varphi=\varphi i^*=-1_{K_1(M)}$. Thus, $-\varphi$ is an inverse to $i^*$, and so the inclusion functor
  $\xymatrix{{i^*}:K_1(F)\ar[r]& K_1(M)}$ is an isomorphism, as required.
\end{proof}

  \section{The Proof of the Key Lemma}

\subsection{Preliminary Results}

In preparation for the results needed to prove the key lemma, we present five
constructions.

\begin{Cons}\label{cons1}
   Given a specialization of the \comdia\ $D$
\[
  \xymatrix@C=1cm@R=1cm{
      & a \ar@{=}[r] & a \\
    b'\UP[r]^{f^b_i}\DW[r]
      & b \UP[r]^{g^b_i}\DW[r]\UP[u]^{\beta_i}\DW[u]
        & b''\UP[u]^{\beta_i''}\DW[u]\\
   b'\DW[r]\UP[r]^{f^c_i}\UP[u]^{\alpha_i'}\DW[u]
      & c \DW[r]\UP[r]^{g^c_i}\UP[u]^{\alpha_i}\DW[u]
        & c''\UP[u]^{\alpha_i''}\DW[u]
  }
\]
we may construct a specialization of the \comdia\ $D'$
\[
  \xymatrix@!C{
    b'\UP[r]^{f^b_i}\DW[r]
      & b\UP[r]^{g^b_i}\DW[r]
        & b''\\
    b'\UP[u]\DW[u]\UP[r]^-{\Vmat{f_i^c}{0}}\DW[r]
      & c\oplus p\UP[u]\DW[u]\UP[r]^-{\Smat{g_i^c}{0}{0}{1}}\DW[r]
        &c''\oplus p\UP[u]\DW[u]\\
     & k\UP[u]\DW[u]\ar@{=}[r]
        &k\UP[u]\DW[u]
      \save {"3,2"+<0pt,-10pt>}\PATH~={**\dir{-}}
            '"3,3"+<0pt,-10pt>
            `r^u
            `u^l
            '"3,2"+<0pt,10pt>
            `l^d
            `d^r
      \restore
  }
\]
where $V_M'=\phi(p,V_M)$ and $V_R'=\phi(p,V_R)$.
\end{Cons}
\begin{proof}
  We take the \Phicons\ over the two maps $\beta_i$, which allows us to construct the two
  commutative triangles
\[
  \xymatrix{
     p\ON[rr]^{\beta_i\eta_i}\UPON[dr]^{\eta_i}\DWON[dr]&&a\\
     &  b\UPON[ur]^{\beta_i}\DWON[ur]
  }\quad\quad\quad\quad\quad\quad
  \xymatrix{
     p\ON[rr]^{\beta_i''g^b_i\eta_i}\UPON[dr]^{g^b_i\eta_i}\DWON[dr]&&a\\
     &  b''\UPON[ur]\DWON[ur]_{\beta_i''}
  }
\]
Letting $k=\mbox{ker}(\beta_i\eta_i)$ we find the \comdia\ $D'$
\[
  \xymatrix@C=1.1cm{
    b'\UP[r]^{f^b_i}\DW[r]
      & b\UP[r]^{g^b_i}\DW[r]
        & b''\\
    b'\UP[u]\DW[u]\UP[r]^-{\Vmat{f_i^c}{0}}\DW[r]
      & c\oplus p\UP[u]\DW[u]\UP[r]^-{\Smat{g_i^c}{0}{0}{1}}\DW[r]
        &c''\oplus p\UP[u]\DW[u]\\
     & k\UP[u]\DW[u]\ar@{=}[r]
        &k\UP[u]\DW[u]
      \save {"3,2"+<0pt,-10pt>}\PATH~={**\dir{-}}
            '"3,3"+<0pt,-10pt>
            `r^u
            `u^l
            '"3,2"+<0pt,10pt>
            `l^d
            `d^r
      \restore
  }
\]
where $V_M'=\phi(p,V_M)$ and $V_R'=\phi(p,V_R)$ as required.
\end{proof}

\begin{Cons}\label{cons2}
Given a specialization of the \comdia\ $D$
\[
  \xymatrix@=1cm{
    a'\ar[r]^{f^a}
      & a\ar[r]^{g^a}
        & a''\\
    a'\UP[u]^{\beta_i'}\DW[u]\UP[r]^{f^b_i}\DW[r]
      & b\UP[u]^{\beta_i}\DW[u]\UP[r]^{g^b_i}\DW[r]
        &b''\UP[u]^{\beta_i''}\DW[u]\\
      & c\UP[u]^{\alpha_i}\DW[u]\ar@{=}[r]
        & c\UP[u]^{\alpha_i''}\DW[u]
  }
\]we may construct a specialization of the \comdia\ $D'$
\[
  \xymatrix{
      & b''\ar@{=}[r] & b''\\
      k^a \UP[r]\DW[r]\ar@{=}[d]
        &c\oplus p
        \UP[u]\DW[u]\UP[r]\DW[r]
          &b\UP[u]^{g^b_i}\DW[u]\\
      k^a \ar[r]
        & k^{a''}\UP[r]\DW[r]\UP[u]\DW[u]
          & a'\UP[u]^{f^b_i}\DW[u] \SmallLBLframe
    }
\]
where $H_M'=\phi(p,V_M)$ and $V_M'=\phi(p,V_R)$ and if $V_L$ is of type $j$ then,
\[
  \eqcl{H_B'}_{j+1}=\eqcl{\phi(k^{a''},V_L)}_{j+1}.
\]
\end{Cons}
\begin{proof}
  We form the \Phicons\ over the maps $\beta_i$, to get the two commutative
  triangles
  \[
    \xymatrix{
      p\ON[rr]^{\beta_i\eta_i}\UPON[dr]^{\eta_i}\DWON[dr]&&a\\
      & b\UPON[ur]^{\beta_i}\DWON[ur]
    }\quad\quad\quad\quad\quad\quad
    \xymatrix{
      p\ON[rr]^{\beta_i''g^b_i\eta_i}\UPON[dr]^{g^b_i\eta_i}\DWON[dr]&&a''\\
      & b''\UPON[ur]\DWON[ur]_{\beta_i''}
    }
  \]

  We can construct the \comdia
  \[
    \xymatrix@C=1cm@R=1cm{
      & b''\ar@{=}[r] & b''\\
      k^a \UP[r]^-{\nu_i^a}\DW[r]\ar@{=}[d]
        &c\oplus p
        \UP[u]^{\omega_i^{a''}}\DW[u]\UP[r]^-{\omega_i^{a}}\DW[r]
          &b\UP[u]^{g^b_i}\DW[u]\\
      k^a \UP[r]^{\mu_i}\DW[r]
        & k^{a''}\UP[r]^{\chi_i}\DW[r]\UP[u]^{\nu_i^{a''}}\DW[u]
          & a'\UP[u]^{f^b_i}\DW[u]
      \save {"3,2"+<10pt,3pt>}\PATH~={**\dir{-}}
             `u"3,1"
             '"3,1"+<20pt,13pt>
             `l"2,1"+<10pt,0pt>
             '"2,1"+<10pt,0pt>
             `u^l
             `l^d
             '"3,1"+<-10pt,0pt>
             `d^r
             '"3,2"+<0pt,-10pt>
             `r^u
             '"3,2"+<10pt,3pt>
    \restore
    }
  \]
  By considering, however, the projections onto $p$
  we see that $\mu_1=\mu_2$. Let $\mu=\mu_i$.  We now need to compute the map $\chi_i$.
  Note that we have the two commutative diagrams, with short exact
  rows and columns
  \[
    \xymatrix{
      && a'\ar[d]\\
      k^a\ar[r]\ar[d]^{\mu}
        &p\ar[r]
          & a\ar[d]\\
      k^{a''}\ar[r]\ar[d]^{\delta}
        &p\ar[r]\ar@{=}[u]
          & a''\\
      a'
    }\quad\quad\quad
    \xymatrix{
      && a'\UP[d]\DW[d]\\
      k^a\UP[r]\DW[r]\ar[d]^{\mu}
        &c\oplus p\UP[r]\DW[r]
          & b\UP[d]\DW[d]\\
      k^{a''}\UP[r]\DW[r]\UP[d]^{\chi_i}\DW[d]
        &c\oplus p\UP[r]\DW[r]\ar@{=}[u]
          & b''\\
      a'
    }
  \]
  The right diagram maps onto the left diagram, by the obvious
  maps. These diagrams give us two commutative squares
  from which it is possible to see that $\chi_i=\beta_i^{'-1}\delta$.
  Therefore we have the commutative
  triangle
  \[
    \xymatrix{
      k^{a''}\ON[rr]^{\delta}\DWON[dr]\UPON[dr]^{\chi_i}&&a'\\
      & a'\ar@<0.5ex>@{>->>}!<8pt,8pt>;[ur]
      \ar@<-0.5ex>@{>->>}!<8pt,8pt>;[ur]_{\beta'_i}
    }.
  \]
  If $V_L$ is of type $j$ then
  the \dses\ $\phi(k^{a''},V_L)$
  is of type $j+1$ and we have the type $j+1$
  \comdia
  \[
    \xymatrix@C=1cm{
    {0}\ar[r]
        & {a'}\ar@{=}[r]
          & {a'}\\
    {0} \ar@{=}[u] \ar[r]^-{\Vmat{1}{0}}
        & {0}\oplus{k^{a''}} \ar@<0.5ex>[u]\ar@<-0.5ex>[u]_{\Hmat{0}{\chi_i}}\ar[r]_-{\Hmat{0}{1}}
          & {k^{a''}} \ar@<0.5ex>[u]\ar@<-0.5ex>[u]_{\chi_i}\\
        & {k^a}\ar[u]^-{\Vmat{0}{\mu}}\ar@{=}[r]
          & {k^a} \ar[u]_{\mu}
    }
  \]
  which gives us the relation
    $\eqcl{H'_B}_{j+1}=\eqcl{\phi(k^{a''},V_L)}_{j+1}$
  as required.
\end{proof}

  \begin{Cons}\label{cons2.1}
  Given a specialization of the \comdia\ $D$
  \[
    \xymatrix@C=1cm{
      a'\UP[r]^{f_2}\DW[r]_{f_1}
       &a\UP[r]^{g_2}\DW[r]_{g_1}
         &a''\\
      a'\oplus a'\ar[r]^-{f_1\oplus f_2}
      \DW[u]_{p_r}\UP[u]^{-p_l}
        &a\oplus a\ar[r]^-{g_1\oplus g_2}
        \DW[u]_{p_r}\UP[u]^{-p_l}
          &a''\oplus a''\DW[u]_{p_r}\UP[u]^{-p_l}\\
      a'\UP[r]^{f_1}\DW[r]_{f_2}\DW[u]_{i_l}\UP[u]^{i_r}
       &a\UP[r]^{g_1}\DW[r]_{g_2}\DW[u]_{i_l}\UP[u]^{i_r}
         &a''\DW[u]_{i_l}\UP[u]^{i_r}
     }
  \]we may construct a specialization of the \comdia\ $D'$
\[
  \xymatrix@C=1.3cm@R=1cm{
      k\UP[r]^-{\nu_2}\DW[r]_-{\nu_1}
       &a'\oplus p\UP[r]^-{\omega_2}\DW[r]_-{\omega_1}
         &a\\
      k\oplus k\ar[r]^-{\nu_1\oplus \nu_2}
      \DW[u]_{p_r}\UP[u]^{-p_l}
        &a'\oplus p \oplus a'\oplus p\ar[r]^-{\omega_1\oplus \omega_2}
        \DW[u]_{p_r}\UP[u]^{-p_l}
          &a\oplus a\DW[u]_{p_r}\UP[u]^{-p_l}\\
      k\UP[r]^-{\nu_1}\DW[r]_-{\nu_2}\DW[u]_{i_l}\UP[u]^{i_r}
       &a'\oplus p\UP[r]^-{\omega_1}\DW[r]_-{\omega_2}\DW[u]_{i_l}\UP[u]^{i_r}
         &a\DW[u]_{i_l}\UP[u]^{i_r}
      \save {"1,1"+<0pt,12pt>}\PATH ~={**\dir{-}}
                  '"1,1"+<7pt,12pt>
                  `r_d
                  '"3,1"+<17pt,-2pt>
                  `d_l
                  '"3,1"+<-12pt,-12pt>
                  `l_u
                  '"1,1"+<-22pt,2pt>
                  `u_r
                  '"1,1"+<0pt,12pt>
  \restore
     }
\]
where $H_T' = \phi(p,H_T)$ and $H_B' = \phi(p,H_B)$.
\end{Cons}
\begin{proof}
  Form the \Phicons\ over the maps $g_i$ to get the commutative
  diagram
  \[
    \xymatrix{
      p \ON[rr]^{g_i\eta_i}\UPON[dr]^{\eta_i}\DWON[dr] && a''\\
      & a \UPON[ur]^{g_i}\DWON[ur]
    }
  \]

  We can then find the \comdia
  \[
   \xymatrix@C=1.3cm@R=1cm{
      k\UP[r]^-{\nu_2}\DW[r]_-{\nu_1}
       &a'\oplus p\UP[r]^-{\omega_2}\DW[r]_-{\omega_1}
         &a\\
      k\oplus k\ar[r]^-{\nu_1\oplus \nu_2}
      \DW[u]_{p_r}\UP[u]^{-p_l}
        &a'\oplus p \oplus a'\oplus p\ar[r]^-{\omega_1\oplus \omega_2}
        \DW[u]_{p_r}\UP[u]^{-p_l}
          &a\oplus a\DW[u]_{p_r}\UP[u]^{-p_l}\\
      k\UP[r]^-{\nu_1}\DW[r]_-{\nu_2}\DW[u]_{i_l}\UP[u]^{i_r}
       &a'\oplus p\UP[r]^-{\omega_1}\DW[r]_-{\omega_2}\DW[u]_{i_l}\UP[u]^{i_r}
         &a\DW[u]_{i_l}\UP[u]^{i_r}
      \save {"1,1"+<0pt,12pt>}\PATH ~={**\dir{-}}
                  '"1,1"+<7pt,12pt>
                  `r_d
                  '"3,1"+<17pt,-2pt>
                  `d_l
                  '"3,1"+<-12pt,-12pt>
                  `l_u
                  '"1,1"+<-22pt,2pt>
                  `u_r
                  '"1,1"+<0pt,12pt>
  \restore
     }
  \]
  as required.
\end{proof}
\begin{Cons}\label{cons.new}
  Given a \comdia\ which is a specialization of the \comdia\ $D$
  \[
    \xymatrix@C=1.1cm@R=1.1cm{
        & a\ar@{=}[r]
          &a\\
      c'\ar@{=}[d]\UP[r]^{f_i^b}\DW[r]
        & b\UP[u]^{\beta_i}\DW[u]\UP[r]^{g_i^b}\DW[r]
          & b''\ar[u]^{\beta''}\\
      c'\UP[r]^{f_i^c}\DW[r]
        &c\UP[r]^{g_i^c}\DW[r]\UP[u]^{\alpha_i}\DW[u]
          &c''\ar[u]^{\alpha''}
    }
  \]
  we may construct a specialization of the \comdia\ $D'$
  \[
    \xymatrix{
      k^a\UP[r]\DW[r]
        & c\oplus p\UP[r]\DW[r]
          &b\\
      k^a\ar@{=}[u]\ar[r]
        & k^a\oplus c' \oplus p\ar[r]\UP[u]\DW[u]
          &c' \oplus p\UP[u]\DW[u]\\
        & k^{b''}\ar@{=}[r]\UP[u]\DW[u]
          & k^{b''}\UP[u]\DW[u] \SmallRectLeftTopVerFrame\SmallRectRightBottomHorFrame
    }
  \]
  where $H_T'=\phi(p,V_M)$, $V_R'=\phi(p,H_M)$ and if $H_B$ if of type 2, then
  \[
    \eqcl{V_M'}_{3}=\eqcl{\phi(k^a,H_B)}_{3}.
  \]
\end{Cons}
\begin{proof}
  We form the \Phicons\ over the morphisms $g_i^b$, which allows us two construct the two 
  commutative triangles,
  \[
    \xymatrix{
      p\ON[rr]^{g_i^b\eta_i}\UPON[dr]^{\eta_i}\DWON[dr] && b''\\
      & b\UPON[ur]\DWON[ur]_{g_i^b}
    }\quad\quad\quad\quad\quad
    \xymatrix{
      p\ON[rr]^{\beta_i\eta_i}\DWON[dr]\UPON[dr]^{\eta_i} && a\\
      & b\UPON[ur]\DWON[ur]_{\beta_i}
    }
  \]
  Thus we may also construct the \comdia
  \[
    \xymatrix@C=1.5cm@R=1cm{
      k^a\UP[r]^{\nu_i^a}\DW[r]
        & c\oplus p\UP[r]^{\omega_i^a}\DW[r]
          &b\\
      k^a\ar@{=}[u]\ar[r]
        & k^a\oplus c' \oplus p\ar[r]\UP[u]^{
        \left(\begin{smallmatrix}
        {p_L\nu_i^a}&{f_i^c}&{0}\\
        {p_R\nu_i^a}&{0}&{1}
        \end{smallmatrix}\right)}\DW[u]
          &c' \oplus p\UP[u]^{\omega_i^{b''}}\DW[u]\\
        & k^{b''}\ar@{=}[r]\UP[u]\DW[u]
          & k^{b''}\UP[u]^{\nu_i^{b''}}\DW[u] \SmallRectLeftTopVerFrame\SmallRectRightBottomHorFrame
    }
  \]

   Now suppose that $H_B$ is of type 2.
  We may construct the following
  commutative diagram
  \[
    \xymatrix{
      {c''} \\
      k^a \ar[u]^{\xi}\ar[r]^{\tau^a} & p\ar[r]^{\beta_i\eta_i} & a\\
      k^{b''}\ar[r]_{\tau^{a''}}\ar[u]^{\mu} &
      p\ar@{=}[u]\ar[r]_{g^b_i\eta_i}&{b''}\ar[u]_{\beta''}\\
      &&{c''}\ar[u]_{\alpha''}
    }
  \]
  which gives us the relation $\alpha''\xi=g^b_i\eta_i\tau^a$.  Combined with
  $\omega^a_i\nu^a_i=0$ we can show that
  $\xi=-g_i^c p_L\nu_i^{a}$ and hence we know that we have the
  commutative triangle,
  \[
    \xymatrix{
      k^a\ON[rr]^{\xi}\UPON[dr]\DWON[dr]_{-p_L\nu_i^{a}} && c''\\
      & c\UPON[ur]\DWON[ur]_{g_i^c}
    }.
  \]

  This triangle allows us to construct $\phi(k^{a},H_M)$
   and thus the two type
  3 \comdia s
  \[
    \xymatrix@R=1cm{
      k^{b''}\UP[r]\DW[r]
        & k^a\oplus c'\oplus p\UP[r]\DW[r]
          &c\oplus p\\
      k^{b''}\UP[r]\DW[r]\ar@{=}[u]
        & p\oplus c'\oplus
      k^a\UP[r]\DW[r]\ar[u]^{\left(\begin{smallmatrix}0&0&1\\0&1&0\\1&0&0
          \end{smallmatrix}\right)}
          &c\oplus p\ar@{=}[u]\\
      0\ar[u]&0\ar[u]&0\ar[u]
    \save {"3,3"+<16pt,0pt>}\PATH
    ~={**\dir{-}} '"1,3"+<16pt,0pt>
                  `u^l
                  '"1,1"+<-2pt,10pt>
                  `l^d
                  '"3,1"+<-12pt,0pt>
                  `d^r
                  '"3,3"+<6pt,-10pt>
                  `r^u
  \restore
    }\quad\quad
    \xymatrix@C=1.1cm{
      p\ar[r]_{\Vmat{0}{1}}
        &c\oplus p \ar[r]_-{\Hmat{1}{0}}
          &c\\
      p\ar@{=}[u]\ar[r]_(.4){\left(\begin{smallmatrix}
            1\\0\\0\\ \end{smallmatrix}\right)}
        &p\oplus c'\oplus k^a \UP[u]\DW[u]\ar[r]_-{\left(\begin{smallmatrix}
            0&1&0\\0&0&-1\end{smallmatrix}\right)}
          &c'\oplus k^a\UP[u]\DW[u]\\
        &k^{b''}\ar@{=}[r]\UP[u]\DW[u]
          &k^{b''}\UP[u]\DW[u]
    \save {"3,3"+<17pt,0pt>}\PATH
    ~={**\dir{-}} '"1,3"+<17pt,0pt>
                  `u^l
                  '"1,1"+<0pt,10pt>
                  `l^d
                  '"3,1"+<-10pt,0pt>
                  `d^r
                  '"3,3"+<7pt,-10pt>
                  `r^u
  \restore
    }
  \]
  give the relation $\eqcl{V_M'}_{3}=\eqcl{\phi(k^a,H_B)}_{3}$
  as required.
\end{proof}

\begin{Cons}\label{cons2.2}
  Given a specialization of the \comdia\ $D$
  \[
    \xymatrix@C=1.2cm{
      a'\UP[r]^{f_i}\DW[r]
        & a\UP[r]^{g_i}\DW[r]
          &a''\\
      a'\ar@{=}[u]\ar[r]_-{\Vmat{1}{0}}
        & a'\oplus p'\ar[r]^(.55){\Hmat{0}{1}}\UP[u]^{\Hmat{f_i}{\eta_i'}}\DW[u]
          &p'\UP[u]^{g_i\eta_i'}\DW[u]\\
        & k'\UP[u]^{\nu_i'}\DW[u]\ar@{=}[r]
          & k'\UP[u]^{\tau_i'}\DW[u]
          \SmallRectRightBottomHorFrame
  }
  \]
  we may construct a specialization of the \comdia\ $D'$
  \[
   \xymatrix@C=1cm{
      k' \UP[r]^-{\nu_i'}\DW[r]
        & a'\oplus p' \UP[r]^-{\Hmat{f_i}{\eta_i'}}\DW[r]
          & a\\
      k'\ar@{=}[u]\ar[r]^-{\left(\begin{smallmatrix}1\\0\\0\end{smallmatrix}\right)}
        & k'\oplus a'\oplus p\UP[u]\DW[u]
        \ar[r]^-{\left(\begin{smallmatrix}0&1&0\\0&0&1\\\end{smallmatrix}\right)}
          & a'\oplus p\UP[u]\DW[u]\\
        & k\UP[u]\DW[u]\ar@{=}[r]
          & k\UP[u]\DW[u] \SmallRectLeftTopVerFrame\SmallRectRightBottomHorFrame
    }
  \]
  where $V_R'=\phi(p,H_T)$
  and if $D$ is of type $j$ then
  \[
    \eqcl{V_M'}_{j+1} = \eqcl{\phi(p,V_R)}_{j+1}.
  \]
\end{Cons}
\begin{proof}
  Form the \Phicons\ over the maps $g_i\eta_i'$ to get
  the two commutative triangles,
  \[
    \xymatrix{
      p\ON[rr]^{g_i\eta_i'\gamma_i}\UPON[dr]\DWON[dr]_{\gamma_i} && a''\\
      & p'\UPON[ur]^{g_i\eta_i'}\DWON[ur]
    }\quad\quad\quad\quad\quad
    \xymatrix{
      p\ON[rr]^{g_i\eta_i'\gamma_i}\UP[dr]\DW[dr]_(.4){\eta_i'\gamma_i}
        && a''\\
      & a\UPON[ur]\DWON[ur]_{g_i}
    }
  \]
  We may construct the type $j+1$ \comdia
  \[
    \xymatrix@C=1cm{
      k' \UP[r]^-{\nu_i'}\DW[r]
        & a'\oplus p' \UP[r]^-{\Hmat{f_i}{\eta_i'}}\DW[r]
          & a\\
      k'\ar@{=}[u]\ar[r]^-{\left(\begin{smallmatrix}1\\0\\0\end{smallmatrix}\right)}
        & k'\oplus a'\oplus p\UP[u]\DW[u]
        \ar[r]^-{\left(\begin{smallmatrix}0&1&0\\0&0&1\\\end{smallmatrix}\right)}
          & a'\oplus p\UP[u]\DW[u]\\
        & k\UP[u]\DW[u]\ar@{=}[r]
          & k\UP[u]\DW[u] \SmallRectLeftTopVerFrame\SmallRectRightBottomHorFrame
    }
  \]

  Now, let $D$ be of type $j$. By using the first commutative
  triangle we can construct $\phi(p,V_R)$, since $\ker(\gamma_i)=\ker(g_i\eta_i')=k'$.
  Then we have the two type $j+1$ \comdia s
  \[
    \xymatrix@R=1cm{
      k\UP[r]\DW[r]
        &k'\oplus a'\oplus p\UP[r]\DW[r]
          & a'\oplus p'\\
      k\UP[r]\DW[r]\ar@{=}[u]
        &a'\oplus k'\oplus
        p\UP[r]\DW[r]\ar[u]^{\left(\begin{smallmatrix}
        0&1&0\\1&0&0\\0&0&1\end{smallmatrix}\right)}
          & a'\oplus p'\ar@{=}[u]\\
      0\ar[u]&0\ar[u]&0\ar[u]
      \SmallRectLeftTopVerFrame
    }\quad\quad
    \xymatrix@C=1.3cm{
      a'\ar[r]_-{\Vmat{1}{0}}
        &a'\oplus p'\ar[r]_-{\Hmat{0}{1}}
          &p'\\
      a'\ar@{=}[u]\ar[r]_-{\left(\begin{smallmatrix}1\\0\\0\end{smallmatrix}\right)}
        &a'\oplus k'\oplus
        p\ar[r]_-{\left(\begin{smallmatrix}0&1&0\\0&0&1\end{smallmatrix}\right)}
        \UP[u]\DW[u]
          &k'\oplus p\UP[u]\DW[u]\\
        &k\UP[u]\DW[u]\ar@{=}[r]
          &k\UP[u]\DW[u]
     \SmallRectRightBottomHorFrame
    }
  \]
  which together give us the relation
    $\eqcl{V_M'}_{j+1} = \eqcl{\phi(p,V_R)}_{j+1}$,
  as required.
\end{proof}

\begin{Cons}\label{cons2.3}
  Given a specialization of the \comdia\ $D$
  \[
    \xymatrix{
      a'\UP[r]^{f_i'}\DW[r]
        & a\UP[r]^{g_i'}\DW[r]
          &a''\\
      a'\UP[r]^{f_i}\DW[r]\UP[u]^{\beta_i'}\DW[u]
        &a\UP[r]^{g_i}\DW[r]\UP[u]^{\beta_i}\DW[u]
          &a''\UP[u]^{\beta_i''}\DW[u]\\
      0\ar[u] &0\ar[u]&0\ar[u]
      \save{"3,1"+<0pt,10pt>}\PATH~={**\dir{-}}
           '"3,3"+<0pt,10pt>
           `r_d
           `d_l
           '"3,1"+<0pt,-10pt>
           `l_u
           `u_r
      \restore
    }
  \]
  we may construct specializations of the two \comdia s $D_1$ and $D_2$ respectively
  \[
    \xymatrix@R=1cm{
      k\UP[r]\DW[r]
        &a'\oplus p\UP[r]\DW[r]
        &a\\
      k\UP[r]\DW[r]_-{\tilde{\nu}_i}\ar@{=}[u]
        &a'\oplus p\UP[r]\DW[r]_-{\tilde{\omega}_i}\UP[u]^{\Smat{\beta_i'}{0}{0}{1}}\DW[u]
          &a\UP[u]^{\beta_i}\DW[u]\\
      0\ar[u]& 0\ar[u] &0\ar[u] \Lframe
    }\quad\quad\quad\quad
    \xymatrix{
      a'\UP[r]^{f_i}\DW[r]
        &a\UP[r]^{g_i}\DW[r]
          &a''\\
      a'\ar@{=}[u]\ar[r]
        &a'\oplus p\ar[r]\UP[u]^{\tilde{\omega}_i}\DW[u]
          &p\UP[u]\DW[u]\\
        &k\UP[u]^{\tilde{\nu}_i}\DW[u]\ar@{=}[r]
          &k\ar[u] \SmallRBLframe
    }
  \]
  where $H_T^1=\phi(p,H_T)$ and $V_R^2=\phi(p,V_R)$.
\end{Cons}
\begin{proof}
  We form the \Phicons\ over the maps $g_i'$, which gives us the three
  commutative triangles,
  \[
    \xymatrix{
      p\ON[rr]^{g_i'\eta_i}\UPON[dr]^{\eta_i}\DWON[dr] && a''\\
      & a\UPON[ur]^{g_i'}\DWON[ur] &
    }\quad\quad\quad\quad\quad
    \xymatrix{
      p\ON[rr]^{g_i'\eta_i}\UPON[dr]\DWON[dr]_{{\beta_i''}^{-1}g_i'\eta_i} && a''\\
      & a''\UPON[ur]_{\beta_i''}\DWON[ur] &
    }
  \]
  and
  \[
  \xymatrix{
      p\UPON[rr]^{g_i\beta_i^{-1}\eta_i}\DWON[rr]\UPON[dr]\DWON[dr]_{\beta^{-1}\eta_i} && a''\\
      & a\UPON[ur]_{g_i}\DWON[ur] &
    }.
  \]
  Note that $\ker(g_i\beta_i^{-1}\eta_i)=\ker(g_i'\eta_i)$ as
  $g_i\beta_i^{-1}\eta_i$ differs from $g_i'\eta_i$ by the
  automorphism ${\beta_i''}^{-1}$. Hence from these triangles we form the \comdia s
  \[
    \xymatrix@R=1cm{
      k\UP[r]\DW[r]
        &a'\oplus p\UP[r]\DW[r]
        &a\\
      k\UP[r]\DW[r]_-{\tilde{\nu}_i}\ar@{=}[u]
        &a'\oplus p\UP[r]\DW[r]_-{\tilde{\omega}_i}\UP[u]^{\Smat{\beta_i'}{0}{0}{1}}\DW[u]
          &a\UP[u]^{\beta_i}\DW[u]\\
      0\ar[u]& 0\ar[u] &0\ar[u] \Lframe
    }\quad\quad
    \xymatrix{
      a'\UP[r]^{f_i}\DW[r]
        &a\UP[r]^{g_i}\DW[r]
          &a''\\
      a'\ar@{=}[u]\ar[r]
        &a'\oplus p\ar[r]\UP[u]^{\tilde{\omega}_i}\DW[u]
          &p\UP[u]\DW[u]_{g_i\beta_i^{-1}\eta_i}\\
        &k\UP[u]^{\tilde{\nu}_i}\DW[u]\ar@{=}[r]
          &k\ar[u] \SmallRBLframe
    }
  \]
  as required.
\end{proof}

  \subsection{The Homomorphisms $\theta_{j}\phi_j$ are Independent of Choice}

We now give a proof that, for all $j=1,2,3$, $\theta_{j}\phi_j$ is
independent of the choice of $p_d$. This result is used implicitly
throughout the rest of this paper.

\begin{Lem}\label{lem.results1:choice}
  Let $d=\xymatrix{0 \ar[r] & {a'} \ar@<0.5ex>[r]^{f_i}\ar@<-0.5ex>[r]
    & {a} \ar@<0.5ex>[r]^{g_i}\ar@<-0.5ex>[r] & {a''} \ar[r] &0}$ be the type $j$ \dses,
  and suppose we have the choice of $p$ or $p'$ for $p_d$. Then
  $\eqcl{\phi(p,d)}_{j+1}=\eqcl{\phi(p',d)}_{j+1}.$
\end{Lem}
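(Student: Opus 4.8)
The plan is to reduce to the case where one choice of $p_d$ ``dominates'' the other, and then to write down a single \comdia\ that yields the equality at once.

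First I would observe that a choice of $p_d$ really consists of an object $p\in\cat F$ \emph{together with} the admissible epimorphisms $\eta_1,\eta_2\colon p\to a$ produced by Construction \ref{consp_d}, i.e.\ with an admissible epimorphism $\psi\colon p\to a\times_{a''}a$ satisfying $\eta_i=\gamma_i\psi$. Given two such choices $(p,\psi)$ and $(p',\psi')$, form the pullback $P_0:=p\times_{a\times_{a''}a}p'$. Since the pullback of an admissible epimorphism is an admissible epimorphism, both projections $P_0\to p$ and $P_0\to p'$ are admissible epimorphisms, and by property (\ref{prop2}) we may choose $\hat p\in\cat F$ with an admissible epimorphism onto $P_0$. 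Composing gives admissible epimorphisms $\pi\colon\hat p\to p$ and $\pi'\colon\hat p\to p'$ with $\eta_i^p\pi=\eta_i^{p'}\pi'$, so $\hat p$ is again a legitimate choice for $p_d$. Hence it suffices to treat the case in which there is an admissible epimorphism $\pi\colon p'\to p$ with $\eta_i^p\pi=\eta_i^{p'}$, and then apply it twice: to $\hat p$ over $p$ and to $\hat p$ over $p'$.

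In the dominating case I would set $q:=\ker\pi$, so $q\in\cat F$ by property (\ref{prop1}). Write $\phi(p,d)$ as $0\to k\to a'\oplus p\to a\to 0$ with $k=\ker(g_i\eta_i^p)$, and likewise $\phi(p',d)$ with $k'=\ker(g_i\eta_i^{p'})$. The morphism $\mathrm{id}_{a'}\oplus\pi\colon a'\oplus p'\to a'\oplus p$ is compatible with the two maps onto $a$, so it induces a surjection of \dses s $\phi(p',d)\to\phi(p,d)$. Its kernel is the \dses\ $0\to q\to q\to 0\to 0$ — the one point that needs care is that, because $\eta_i^{p'}$ annihilates $q$, the subobject $q\subseteq k'\subseteq a'\oplus p'$ does not depend on $i$ — and this kernel has class $0$ by the first defining relation of $T_{j+1}$. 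Displaying this surjection together with its kernel \emph{is} a \comdia: its rows are $\phi(p,d)$, $\phi(p',d)$ and $0\to q\to q\to 0\to 0$, and its three vertical \dses s are $0\to q\to k'\to k\to 0$, $0\to q\to a'\oplus p'\to a'\oplus p\to 0$ and $0\to 0\to a\to a\to 0$, each of which has coinciding pairs of maps and hence class $0$. The relation attached to this \comdia\ then reads $\eqcl{\phi(p,d)}-\eqcl{\phi(p',d)}=0$.

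The remaining step, which I expect to be the main obstacle, is to check that this comparison \comdia\ actually lies in the \emph{restricted} relation set defining $T_{j+1}$. For $j+1=3$ this is free: Case 4 of Definition \ref{def.T} admits every $3\times 3$ \comdia\ over $\cat F$, and here all objects lie in $\cat F$ ($k,k',q$ by the above; the others because the relevant $d$ is then of type $2$). For $j+1\in\{1,2\}$ the allowed diagrams of Case 2, resp.\ Case 3, never have a generic \dses\ in the top row, so one cannot use the comparison \comdia\ directly; instead I would reduce further to the split situation $p'=p\oplus p_0$, so that $\phi(p',d)$ is $\phi(p,d)$ with a trivial \dses\ adjoined as a direct summand, and then strip off that summand using the permitted ``$\phi$-shaped'' \comdia\ together with the permitted \comdia\ giving $[V_L]-[V_M]+[V_R]=0$. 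Verifying that the evidently correct comparison \comdia\ is generated by Nenashev's short list of relations is the real content of the argument; granting it, the dominating-choice case applied twice gives $\eqcl{\phi(p,d)}_{j+1}=\eqcl{\phi(\hat p,d)}_{j+1}=\eqcl{\phi(p',d)}_{j+1}$, as required.
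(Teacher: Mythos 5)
Your reduction step is essentially the one the paper uses: the paper likewise forms a pullback (namely $\tilde{p}=p\times_{a''}p'$, taken over $a''$ rather than over $a\times_{a''}a$), notes that it lies in $\cat{F}$ by extension-closure, and compares each of $p$, $p'$ with the dominating choice. The problem is the second half of your argument. You write down the natural comparison \comdia\ whose rows are $\phi(p,d)$, $\phi(p',d)$ and $0\to q\to q\to 0\to 0$, you correctly observe that for $j+1\in\{1,2\}$ this diagram is \emph{not} a specialization of any of the generating \comdia s listed in Definition \ref{def.T} for $T_1$ or $T_2$, and you then propose to ``reduce further to the split situation $p'=p\oplus p_0$''. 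That reduction is not available: the admissible epimorphism $\pi\colon\hat{p}\to p$ produced by your pullback has no reason to split in an exact category, and nothing in hypotheses (\ref{prop3})--(\ref{prop2}) of Theorem \ref{thm.resolution} lets you replace it by a split one. Since you yourself identify the verification that the comparison is generated by Nenashev's restricted list as ``the real content of the argument'' and then grant it, the proposal has a genuine gap precisely where the paper does its work; only the cases $j+1=3$ (all \comdia s over $\cat{F}$ allowed) and $j=0$ (all \comdia s allowed for the auxiliary relations) come for free.

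The paper closes the gap for $j=1,2$ not by forcing the comparison \comdia\ into an allowed shape but by routing the argument through the map $\Smat{1}{0}{0}{p_L}\colon a'\oplus\tilde{p}\to a'\oplus p$ over $a$: it builds one type $j+1$ \comdia\ of the permitted ``chair'' shape, with top row $\phi(p,d)$, right column $\phi(\tilde{p},d)$ and an auxiliary middle column $V_M$ through $k\oplus a'\oplus\tilde{p}$ (this is in effect Construction \ref{cons2.2} applied to the situation), and then a second permitted type $j+1$ \comdia\ showing $\eqcl{V_M}_{j+1}=0$; chaining the two relations gives $\eqcl{\phi(p,d)}_{j+1}=\eqcl{\phi(\tilde{p},d)}_{j+1}$, and symmetry in $p$ and $p'$ finishes the proof. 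To repair your argument you would need to replace the ``split situation'' step by an explicit factorization of this kind through the diagrams actually permitted in $T_1$ and $T_2$.
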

\begin{proof}
Let $p,p'\in{F}$ be \st\ either are valid choices for $p_d$. Then
we have two commutative triangles,
\[
  \xymatrix{
    p\ON[rr]^{g_i\eta_i}\DW[dr]_{\eta_i}\UP[dr] && a''\\
    & a\UPON[ur]\DWON[ur]_{g_i}
  }\quad\quad\quad
\xymatrix{
    p'\ON[rr]^{g_i\eta_i'}\DW[dr]_{\eta_i'}\UP[dr] && a''\\
    & a\UPON[ur]\DWON[ur]_{g_i}
  }.
\]
We may form the pullback square
\[
  \xymatrix{
    p\ON[r]^{g_i\eta_i}&a''\\
    p\times_{a''}p'\ON[u]^{p_L}\ON[r]^-{p_R}
      & p'\ON[u]_{g_i\eta_i'}
  }
\]
and by property (\ref{prop3}) of theorem \ref{thm.resolution} we know that
$p\times_{a''}p'\in{F}$. For ease of notation, let $\tilde{p}=p\times_{a''}p'$.
We will show that $\eqcl{\phi(p,d)}_{j+1} =
\eqcl{\phi(\tilde{p},d)}_{j+1}$. Note that $\ker(p_L)=\ker(g_i\eta_i')$,
so again for ease of notation let $k'=\ker(p_L)$.

\case{1} Suppose that $j=0$.
We may construct the \comdia
\[
  \xymatrix@C=1.1cm{
      &a\ar@{=}[r]
        &a\\
    k'\ar[r]^-{\Vmat{0}{\gamma}}
      &a'\oplus{\tilde{p}}\UP[u]^{\tilde{\omega}_i}\DW[u]
      \ar[r]^-{\Smat{1}{0}{0}{p_L}}
        &a'\oplus p\UP[u]^{\omega_i}\DW[u]\\
    k'\ar@{=}[u]\DW[r]\UP[r]^{\mu_i}
      &{\tilde{k}}\DW[r]\UP[r]^{\chi_i}\UP[u]^{\tilde{\nu}_i}\DW[u]
        &k\UP[u]^{\nu_i}\DW[u]
    \save {"3,3"+<10pt,4pt>}\PATH
    ~={**\dir{-}} `u"3,1"
                  '"3,1"+<20pt,14pt>
                  `l"2,1"+<10pt,0pt>
                  '"2,1"+<10pt,0pt>
                  `u^l
                  `l^d
                  '"3,1"+<-10pt,0pt>
                  `d^r
                  '"3,3"+<0pt,-10pt>
                  `r^u
                  '"3,3"+<10pt,4pt>
    \restore
    }
\]
However by considering the projections onto $p$ and $\tilde{p}$ 
we see that $\chi_1=\chi_2$ and $\mu_1=\mu_2$. For consistency of notation, let
$\chi=\chi_i$ and $\mu=\mu_i$. Thus we
have really constructed the \comdia\ $D$
\[
  \xymatrix@C=1.1cm{
      &a\ar@{=}[r]
        &a\\
    k'\ar[r]^-{\Vmat{0}{\gamma}}
      &a'\oplus{\tilde{p}}\UP[u]^{\tilde{\omega}_i}\DW[u]
      \ar[r]^-{\Smat{1}{0}{0}{p_L}}
        &a'\oplus p\UP[u]^{\omega_i}\DW[u]\\
    k'\ar@{=}[u]\ar[r]^{\mu}
      &{\tilde{k}}\ar[r]^{\chi}\UP[u]^{\tilde{\nu}_i}\DW[u]
        &k\UP[u]^{\nu_i}\DW[u]
    \save {"3,3"+<10pt,3pt>}\PATH
    ~={**\dir{-}} `u"3,1"
                  '"3,1"+<20pt,13pt>
                  `l"2,1"+<10pt,0pt>
                  '"2,1"+<10pt,0pt>
                  `u^l
                  `l^d
                  '"3,1"+<-10pt,0pt>
                  `d^r
                  '"3,3"+<0pt,-10pt>
                  `r^u
                  '"3,3"+<10pt,3pt>
    \restore
    }
\]
We apply construction \ref{cons1} to $D$ to get the relation
  $\eqcl{\phi(p_1,V_M)}_1=\eqcl{\phi(p_1,V_R)}_1.$
As $V_M$ and $V_R$ are both of type 1 we know that 
$\eqcl{V_M}_1=\eqcl{V_R}_1$ and hence 
$\eqcl{\phi(\tilde{p},d)}_1=\eqcl{\phi(p,d)}_1$
since, by construction, $V_M=\phi(\tilde{p},d)$ and $V_R=\phi(p,d)$.

\case{2} Suppose that $j=1,2$.
We have the commutative triangle,
\[
  \xymatrix{
    a'\oplus\tilde{p} \DWON[rr]\UPON[rr]^{\tilde{\omega}_i}
    \ON[dr]_(.4){\Smat{1}{0}{0}{p_L}}&& a\\
    & a'\oplus p \UPON[ur]\DWON[ur]_{\omega_i}
  }
\]
with which we can construct the type $j+1$ \comdia\ $D$
\begin{gather*}
 \xymatrix@C=1.1cm@R=1.1cm{
    k\UP[r]\DW[r]_-{\nu_i}&
      {a'}\oplus{p}\UP[r]\DW[r]_-{\omega_i}&
        {a}\\
    {k}\ar[r]_-{\left(\begin{smallmatrix}1\\0\\0\end{smallmatrix}\right)}
    \ar@{=}[u] &
      {k}\oplus{a'}\oplus{\tilde{p}}\ar[r]_-{\left(\begin{smallmatrix}0&1&0\\0&0&1\end{smallmatrix}\right)}
      \UP[u]^{\xi_i}\DW[u]
        & {a'}\oplus{\tilde{p}}\DW[u]
        \UP[u]^{\tilde{\omega}_i}\\
      &{\tilde{k}}\ar@{=}[r]\DW[u]\UP[u]^{\rho_i} &
        {\tilde{k}}\DW[u]\UP[u]^{\tilde{\nu}_i}
    \save {"3,3"+<17pt,0pt>}\PATH
    ~={**\dir{-}} '"2,3"+<17pt,0pt>
                  `u^l
                  '"2,2"+<26pt,10pt>
                  `l_u
                  '"1,2"+<16pt,0pt>
                  `u^l
                  '"1,1"+<0pt,10pt>
                  `l^d
                  '"3,1"+<-10pt,0pt>
                  `d^r
                  '"3,3"+<7pt,-10pt>
                  `r^u
  \restore
  }
\end{gather*}
By considering the morphisms $\chi$ and $p_L$ we can construct the
type $j+1$ \comdia\
\[
  \xymatrix@C=1.1cm@R=1.1cm{
    k\ar@{=}[r]
      &k&\\
    {\tilde{k}}\ar[u]^-{\chi}\UP[r]^-{\rho_i}\DW[r]
      &k\oplus
      a'\oplus\tilde{p}\ar[u]^-{\left(\begin{smallmatrix}-1&0&0\end{smallmatrix}\right)}
      \UP[r]^-{\xi_i}\DW[r]
        &a'\oplus p\ar@{=}[d]\\
    k'\ar[u]^-{\mu}\ar[r]^(.35){\Vmat{0}{\gamma}}
      &a'\oplus\tilde{p}\ar[u]^-{\left(\begin{smallmatrix}0&0\\1&0\\0&1\end{smallmatrix}\right)}
      \ar[r]^-{\Smat{1}{0}{0}{p_l}}
        &a'\oplus p  \save {"3,3"+<19pt,-1pt>}\PATH
    ~={**\dir{-}} '"1,3"+<19pt,0pt>
                  `u^l
                  '"1,1"+<-2pt,10pt>
                  `l^d
                  '"3,1"+<-12pt,-1pt>
                  `d^r
                  '"3,3"+<9pt,-11pt>
                  `r^u
  \restore
  }
\]
These two \comdia s combined give us the relation
\[
	\eqcl{\phi(p,d)}_{j+1}=\eqcl{\phi(\tilde{p},d)}_{j+1}.
\]
Hence in either case we see that
$\eqcl{\phi(p,d)}_{j+1}=\eqcl{\phi(\tilde{p},d)}_{j+1}$ as was required.
\end{proof}

\begin{Cor}\label{cor.choice:independent}
  For all $j=1,2,3$, the function $\theta_j\phi_j$ is independent
  of the choice of $p_d$.
\end{Cor}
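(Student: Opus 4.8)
The plan is to deduce the corollary immediately from Lemma \ref{lem.results1:choice}. Fix $j\in\{1,2,3\}$. By construction the homomorphism $\phi_j\colon F_{j-1}\to F_j$ is defined on a generator $[d]_{j-1}$ of $F_{j-1}$ (with $d$ a type $j-1$ \dses) by $\phi_j([d]_{j-1})=[\phi(p_d,d)]_j$ and then extended by linearity, so the composite $\theta_j\phi_j\colon F_{j-1}\to T_j$ is the group homomorphism determined by $[d]_{j-1}\mapsto\eqcl{\phi(p_d,d)}_j$. The only place a choice enters the definition of $\theta_j\phi_j$ is the selection, for each $d$, of the object $p_d\in\cat{F}$ supplied by construction \ref{consp_d} and used to form the \Phicons\ over the morphisms $g_i$ of $d$.

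First I would note that if $\{p_d\}$ and $\{p_d'\}$ are two such systems of choices, yielding homomorphisms $\phi_j$ and $\phi_j'$, then it suffices to show $\theta_j\phi_j$ and $\theta_j\phi_j'$ agree on every generator $[d]_{j-1}$, since both are homomorphisms out of the free group $F_{j-1}$. On the generator $[d]_{j-1}$ we have $\theta_j\phi_j([d]_{j-1})=\eqcl{\phi(p_d,d)}_j$ and $\theta_j\phi_j'([d]_{j-1})=\eqcl{\phi(p_d',d)}_j$, and Lemma \ref{lem.results1:choice}, applied to the type $j-1$ \dses\ $d$ with the two valid choices $p_d$ and $p_d'$, gives precisely $\eqcl{\phi(p_d,d)}_j=\eqcl{\phi(p_d',d)}_j$. (If one wishes to argue only through the intermediate object $\tilde p=p\times_{a''}p'$ as in the proof of that lemma, one uses $p\times_{a''}p'=p'\times_{a''}p$ and chains the two resulting equalities $\eqcl{\phi(p_d,d)}_j=\eqcl{\phi(\tilde p,d)}_j=\eqcl{\phi(p_d',d)}_j$.) Hence $\theta_j\phi_j=\theta_j\phi_j'$, which is the assertion.

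There is essentially no obstacle to overcome here: all of the work lives in Lemma \ref{lem.results1:choice} and the \Phicons\ manipulations used in its proof. The corollary is merely the observation that, although $\phi_j$ itself genuinely depends on the choices $p_d$, this dependence is annihilated upon passing to the quotient $T_j$, so the composite $\theta_j\phi_j$ is canonical.
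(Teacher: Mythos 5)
Your proof is correct and follows essentially the same route as the paper's: the corollary is an immediate consequence of Lemma \ref{lem.results1:choice} applied to two choices $p_d$ and $p_d'$ on each generator $[d]$, then extended to all of the free group $F_{j-1}$ by linearity. Your parenthetical remark about chaining the two equalities through $\tilde{p}=p\times_{a''}p'$ is a worthwhile observation, since the lemma's written proof only establishes $\eqcl{\phi(p,d)}=\eqcl{\phi(\tilde{p},d)}$, but this does not change the substance of the argument.
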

\begin{proof}
  Let $p$ and $p'$ be two choices for $p_d$. Then the lemma tells us
  that
  $\eqcl{\phi(p,d)}_{j+1}=\eqcl{\phi(p',d)}_{j+1}.$ Hence
  we have our result.
\end{proof}

Given a \dses\ $d$ we shall now denote\\ $\eqcl{\phi(p_d,d)}_j$ by
$\eqcl{\phi(d)}_j$, since $\eqcl{\phi(p_d,d)}_j$ is independent of
$p_d$.

  \subsection{The Functions $\phi_j$ Induce Group Homomorphisms}\label{sec.welldefined}

We now need to show that if 
  $\sum_{i=1}^{n} a_i\eqcl{d_i}_j=0, a_i\in\mathbb{Z}$
is a relation  then
  $\sum_{i=1}^{n} a_i\eqcl{\phi(d_i)}_{j+1}=0, a_i\in\mathbb{Z}$
is also a relation.  To do so we need only check this equation
for the generating relations of each
group.
  \subsubsection*{The Relations for $T_0$}

From the comments and diagrams in subsection \ref{subsec.gen}, we have three relations to check.

\begin{Lem}\label{lem.rel.1}
  Let $\eqcl{H_T}-\eqcl{H_M}=-\eqcl{V_M}$ be the relation given by the
  type 0
  \comdia\ $D$
\[
  \xymatrix{
      a' \UP[r]^{f_i'}\DW[r]
        & a \UP[r]^{g_i'}\DW[r]
          & a''\\
      a' \ar@{=}[u]\UP[r]^{f_i}\DW[r]
       & a\UP[r]^{g_i}\DW[r]\UP[u]^{\alpha_i}\DW[u]
         & a''\ar@{=}[u] \\
      0 \ar[u] & 0 \ar[u] & 0 \ar[u]
    \save  {"3,1"+<0pt,10pt>} \PATH ~={**\dir{-}}
                           '"3,3"+<0pt,10pt>
                           `r_d
                           `d_l
                           '"3,1"+<0pt,-10pt>
                           `l_u
                           `u_r
    \restore}
\]
  Then $\eqcl{\phi(H_T)}_1-\eqcl{\phi(H_M)}_1=-\eqcl{\phi(V_M)}_1$ is a
  relation in $T_1$.
\end{Lem}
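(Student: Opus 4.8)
The plan is to take the given type~0 \comdia\ $D$, whose three \dses s $H_T,H_M,V_M$ satisfy $\eqcl{H_T}-\eqcl{H_M}=-\eqcl{V_M}$ in $T_0$, and produce a type~1 \comdia\ (or a sequence of them) that exhibits $\eqcl{\phi(H_T)}_1-\eqcl{\phi(H_M)}_1=-\eqcl{\phi(V_M)}_1$ in $T_1$. The key point is that $\eqcl{\phi(d)}_1$ is by Corollary \ref{cor.choice:independent} independent of the chosen object $p_d$, so I am free to build all three $\phi$-constructions using a single, cleverly chosen $p\in\cat{F}$ and then assemble them into a type~1 diagram. The natural candidate is to form the \Phicons\ over the right-hand maps of $H_T$, that is over $g_i'$; since $g_i' = g_i$ via $\alpha_i$ (more precisely $g_i'\alpha_i = g_i$ up to the identifications in $D$), the same $p$ and essentially the same maps $\eta_i$ will serve for $H_T$, for $H_M$, and — after composing with the map to $a''$ which is shared — for $V_M$ as well.

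First I would fix $p\in\cat{F}$ and $\eta_i\colon p\twoheadrightarrow a$ giving $g_i'\eta_i\colon p\twoheadrightarrow a''$, and write down explicitly the three resulting \dses s $\phi(p,H_T)$, $\phi(p,H_M)$, $\phi(p,V_M)$ as the bottom rows $V_M$ of the respective $\phi$-construction \comdia s; each has the shape $0\to k\to a'\oplus p\to(\text{middle term})\to 0$ with $k\in\cat{F}$ by property~(\ref{prop1}). Because the three original sequences sit in one \comdia, their kernels $k$ will be related by short exact sequences, and the three middle terms $a'\oplus p$ (for $\phi(H_T)$ and $\phi(H_M)$) together with the middle term for $\phi(V_M)$ will fit into a \comdia\ whose rows are $\phi(H_T),\phi(H_M),\phi(V_M)$ and whose columns are split or trivial. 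Next I would check that this assembled diagram is a \emph{specialization of one of the type~1 diagrams} listed in Case~2 of Definition \ref{def.T} — most likely the third one, which yields exactly a relation of the form $[V_L]-[V_M]+[V_R]=0$, or a combination of the first and third — so that it is legitimate to read off a relation in $T_1$. Finally, matching the labels $H_T',H_M',V_M'$ of the new diagram against $\phi(H_T),\phi(H_M),\phi(V_M)$ gives the desired identity.

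I would carry this out in the order: (1)~choose $p$ over $g_i'$ and record the three $\phi$-construction diagrams; (2)~identify the maps relating $\ker(g_i'\eta_i)$, $\ker(g_i\eta_i)$ and the analogous kernel for $V_M$, using the snake lemma applied to the columns of $D$; (3)~build the candidate type~1 \comdia\ with rows $\phi(H_T),\phi(H_M),\phi(V_M)$; (4)~verify it is a specialization of an admissible type~1 diagram and extract the relation; (5)~conclude. The main obstacle I expect is step~(3)–(4): making the assembled diagram genuinely commute with the correct direct-sum decompositions on the middle objects, and in particular arranging the columns so that the result is literally a specialization of one of the three allowed shapes rather than merely "morally" such — this is exactly the kind of bookkeeping (choosing $\omega_i=(f_i,\eta_i)$-type matrices, tracking signs, and possibly inserting an auxiliary $\phi$-construction via the preliminary Constructions \ref{cons1}--\ref{cons2.3}) that the paper defers to this section. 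Independence of choice (Corollary \ref{cor.choice:independent}) is what makes the single-$p$ strategy sound, and the relation $\eqcl{\phi(d)}_j=-\eqcl{d}_j$ from the \Phicons\ is the sanity check that the signs come out right.
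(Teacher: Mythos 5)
Your overall strategy --- fix a single $p\in\cat{F}$ by forming the \Phicons\ over the maps $g_i'$, invoke Corollary \ref{cor.choice:independent} so that this one choice serves every \dses\ in sight, and finish using $\eqcl{\phi(d)}=-\eqcl{d}$ --- is exactly the machinery the paper packages into Construction \ref{cons2.3}, and those ingredients are all sound. But the assembly you propose in steps (3)--(4) cannot be carried out: there is no \comdia\ whose three rows are $\phi(H_T)$, $\phi(H_M)$ and $\phi(V_M)$, because all three of these \dses s have the same right-hand object $a$, so the right-hand column of your candidate diagram would have to be an exact column $0\to a\to a\to a\to 0$; that is certainly not the split or trivial column your plan calls for, and no specialization of the admissible type~1 templates accommodates it (the same end-term mismatch rules out placing the three as columns of the third template). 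This is not sign-and-matrix bookkeeping that can be pushed through; the diagram has the wrong shape, so the plan stalls precisely at the step you flagged as the expected obstacle.

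The repair, which is what Construction \ref{cons2.3} actually does, is to leave $V_M$ un-$\phi$'d \emph{inside} the diagram. One builds a specialization of the \emph{first} type~1 template whose top row is $\phi(p,H_T)=(k\to a'\oplus p\to a)$ with $\omega_i=(f_i',\eta_i)$, whose middle row is $k\to a'\oplus p\to a$ with maps $(f_i,\alpha_i^{-1}\eta_i)$ --- a legitimate representative of $\eqcl{\phi(H_M)}_1$ by independence of choice, using that $g_i\alpha_i^{-1}\eta_i=g_i'\eta_i$ is independent of $i$ precisely because the outer columns of $D$ are identities --- whose bottom row is $0$, whose left and middle columns are the identities on $k$ and $a'\oplus p$, and whose right column is the original $V_M=(0\to a\to a)$ with maps $\alpha_i$. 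Reading off the template relation gives $\eqcl{\phi(H_T)}_1-\eqcl{\phi(H_M)}_1=\eqcl{V_M}_1$, and only at the very last step does one substitute $\eqcl{V_M}_1=-\eqcl{\phi(V_M)}_1$. So the identity $\eqcl{\phi(d)}=-\eqcl{d}$, which you relegated to a final sanity check, is in fact a structural step of the argument: $\phi(V_M)$ never appears inside any \comdia\ at all.
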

\begin{proof}
  We apply construction \ref{cons2.3} to $D$ to get the relation
    $\eqcl{\phi(H_T)}_1-\eqcl{\phi(H_M)}_1=\eqcl{V_M}_1$,
  but we also know that $\eqcl{V_M}_1=-\eqcl{\phi(V_M)}_1$, and so we
  have the relation
  $\eqcl{\phi(H_T)}_1-\eqcl{\phi(H_M)}_1=-\eqcl{\phi(V_M)}_1,$ as required.
\end{proof}

\begin{Lem}
  Let $\eqcl{V_L}-\eqcl{V_M}+\eqcl{V_R}=0$ be the relation given by
  the type 0 \comdia\ $D$,
  \[
    \xymatrix@=1cm{
        & a\ar@{=}[r]
          & a\\
      b' \ar[r]^{f^b}
        & b \ar[r]^{g^b}\UP[u]^{\beta_i}\DW[u]
          & b''\UP[u]^{\beta_i''}\DW[u]\\
      b' \UP[u]^{\alpha_i'}\DW[u]\ar[r]^{f^c}
        & c \UP[u]^{\alpha_i}\DW[u]\ar[r]^{g^c}
          & c''\UP[u]^{\alpha_i''}\DW[u]
    }
  \]
  Then $\eqcl{\phi(V_L)}_1-\eqcl{\phi(V_M)}_1+\eqcl{\phi(V_R)}_1=0$ is a
  relation in $T_1$.
\end{Lem}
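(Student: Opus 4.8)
The plan is to imitate the proof of Lemma~\ref{lem.rel.1}: feed the \comdia\ $D$ into one of the preliminary constructions, read off the relation the output diagram forces in $T_1$, and then rewrite it in terms of the classes $\eqcl{\phi(\cdot)}_1$. The key observation is that $D$ is precisely a specialization of the input \comdia\ of Construction~\ref{cons1}: its top row is the trivial \dses\ $\DSE{0}{a}{a}$, its middle and bottom rows are $H_M$ and $H_B$, its columns are $V_L=\DSE{b'}{b'}{0}$, $V_M=\DSE{c}{b}{a}$ and $V_R=\DSE{c''}{b''}{a}$, and the vertical maps $\beta_i,\beta_i''$ into $a$ are exactly the data that construction consumes.

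First I would apply Construction~\ref{cons1} to $D$. This produces a \comdia\ $D'$ whose middle and right columns are $\phi(p,V_M)$ and $\phi(p,V_R)$, where $p\in\cat{F}$ is supplied by the \Phicons\ over the morphisms $\beta_i$, and whose bottom row is the trivial \dses\ on $k=\ker(\beta_i\eta_i)$. Since $k\in\cat{F}$ by property (\ref{prop1}), the diagram $D'$ is a specialization of the \comdia\ in Case~2 of Definition~\ref{def.T} that gives the relation $[V_L]-[V_M]+[V_R]=0$ (framed bottom row $k=k$, middle and right columns otherwise arbitrary). Hence $D'$ yields the relation $\eqcl{V_L'}_1-\eqcl{V_M'}_1+\eqcl{V_R'}_1=0$ in $T_1$, where $V_L',V_M',V_R'$ are the columns of $D'$.

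It then remains to identify these three classes. For the middle and right columns this is immediate: $V_M'=\phi(p,V_M)$ and $V_R'=\phi(p,V_R)$, so Corollary~\ref{cor.choice:independent} gives $\eqcl{V_M'}_1=\eqcl{\phi(V_M)}_1$ and $\eqcl{V_R'}_1=\eqcl{\phi(V_R)}_1$. For the left column I would use that $V_L=\DSE{b'}{b'}{0}$ has trivial cokernel term, so $0\in\cat{F}$ is a legitimate choice of $p_{V_L}$; unwinding the \Phicons\ with this choice shows that $\phi(0,V_L)$ is literally the left column $V_L'=\DSE{0}{b'}{b'}$ of $D'$ (both carrying the isomorphism $\alpha_i'$), so by Corollary~\ref{cor.choice:independent} again $\eqcl{V_L'}_1=\eqcl{\phi(V_L)}_1$. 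Substituting the three identities into the relation above gives $\eqcl{\phi(V_L)}_1-\eqcl{\phi(V_M)}_1+\eqcl{\phi(V_R)}_1=0$, as required. The step needing genuine care — and the one a reader would most want spelled out — is this last identification $V_L'=\phi(0,V_L)$ on the nose, together with the verification that $D'$ really is an instance of the stated $T_1$-generating \comdia; both amount to tracking maps already written down inside Construction~\ref{cons1}, so I expect only bookkeeping rather than real difficulty.
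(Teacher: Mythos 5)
Your overall strategy --- feed $D$ into Construction~\ref{cons1}, read off the relation $\eqcl{V_L'}_1-\eqcl{\phi(V_M)}_1+\eqcl{\phi(V_R)}_1=0$ from the output diagram $D'$, and then identify $\eqcl{V_L'}_1$ with $\eqcl{\phi(V_L)}_1$ --- is exactly the paper's, and your treatment of the middle and right columns is fine. The gap is in the left column. You justify taking $p_{V_L}=0$ by saying that $V_L$ ``has trivial cokernel term,'' but the condition on $p_d$ in Construction~\ref{consp_d} is not that $p$ surject onto the quotient term $a''$: it is that $p$ admit an admissible epimorphism onto the pullback $a\times_{a''}a$, equivalently that the two maps $\eta_i:p\to a$ be admissible epimorphisms onto the \emph{middle} object. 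For $V_L$ (the sequence with sub-object $b'$, middle $b'$ and quotient $0$) this pullback is $b'\times_0 b'=b'\oplus b'$, and $0\to b'\oplus b'$ is not an admissible epimorphism unless $b'=0$. So $0$ is not a legitimate choice of $p_{V_L}$, and Corollary~\ref{cor.choice:independent} --- whose proof compares two genuine choices by pulling them back over $a''$ --- cannot be invoked to give $\eqcl{\phi(0,V_L)}_1=\eqcl{\phi(V_L)}_1$. A secondary point: even granting $p=0$, the \Phicons\ outputs the sequence with middle object $b'\oplus 0$, not literally the column $V_L'$ of $D'$ whose middle object is $b'$; in this formalism that identification costs a further \comdia\ rather than being free.

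The identity $\eqcl{V_L'}_1=\eqcl{\phi(V_L)}_1$ is true, but it has to be proved directly. The paper does this with two explicit type~1 \comdia s: the first is, in effect, the $\phi$-construction diagram for $V_L$ with $p=0$ --- which is a perfectly valid \comdia\ even though $0$ is not a valid $p_{V_L}$, because here $f_i=\alpha_i'$ is already an isomorphism and so $\omega_i=(f_i,\eta_i)$ is an admissible epimorphism regardless of $\eta_i$ --- and it yields $\eqcl{V_L}_1=-\eqcl{d_0}_1$ where $d_0$ is the sequence on $0$, $b'\oplus 0$, $b'$; the second \comdia\ identifies $\eqcl{d_0}_1$ with $\eqcl{V_L'}_1$. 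Combining these with the defining relation $\eqcl{V_L}=-\eqcl{\phi(V_L)}$ of the \Phicons\ gives the identification. So your instinct about which diagram does the work is sound, but the shortcut through ``$0$ is a valid $p_{V_L}$ plus independence of choice'' is not available as stated; you need to exhibit the two auxiliary \comdia s.
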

\begin{proof}
  We apply construction \ref{cons1} to $D$ and get the relation
    $\eqcl{V_L'}_1-\eqcl{\phi(V_M)}_1+\eqcl{\phi(V_R)}_1=0$
  and as we have to two type 1 \comdia s
  \[
    \xymatrix{
      b'\UP[r]\DW[r] &b'\ar[r]&0\\
      b'\ar@{=}[u]\ar[r]^{\Vmat{1}{0}}
        &b'\oplus 0\UP[u]\DW[u]\ar[r]^{\Hmat{0}{1}}&0\\
      0\ar[u]&0\ar[u]
     }\quad\quad\quad\quad\quad
    \xymatrix{
      0\ar[r] & b'\ar@{=}[r] &b'\\
      0\ar[r]& b'\oplus 0\UP[u]\DW[u]\ar[r]
        &b'\UP[u]\DW[u]\\
      & 0\ar[u]&0\ar[u]
    }
  \]
  we can see that $\eqcl{V_L'}=\eqcl{\phi(V_L)}$. Therefore we have the relation
    $\eqcl{\phi(V_L)}_1-\eqcl{\phi(V_M)}_1+\eqcl{\phi(V_R)}_1=0$
  as required.
\end{proof}

\begin{Lem}
  Let $\eqcl{H_T}+\eqcl{H_B}=0$ be the relation given by the type 0 \comdia\ $D$
  \[
    \xymatrix@C=1.5cm@R=1.2cm{
     a \UP[r]^{f_2}\DW[r]_{f_1}
       &  b \UP[r]^{g_2}\DW[r]_{g_1}
         &c \\
     a\oplus a \UP[u]^{-p_l}\DW[u]_{p_r}\UP[r]^-{f_1\oplus f_2}
     \DW[r]_{f_1\oplus f_2}
       & b\oplus b \UP[u]^{-p_l}\DW[u]_{p_r} \UP[r]^-{g_1\oplus g_2}
       \DW[r]_{g_1\oplus g_2}
         & c\oplus c \UP[u]^{-p_l}\DW[u]_{p_r} \\
     a \UP[r]^{f_1}\DW[r]_{f_2}\UP[u]^{i_r}\DW[u]_{i_l}
       &  b \UP[r]^{g_1}\DW[r]_{g_2}\UP[u]^{i_r}\DW[u]_{i_l}
         &c \UP[u]^{i_r}\DW[u]_{i_l}
    }
  \]
  Then $\eqcl{\phi(H_T)}_1+\eqcl{\phi(H_B)}_1=0$ is a relation in $T_1$.
\end{Lem}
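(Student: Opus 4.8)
The plan is to follow the pattern of the preceding two lemmas: push the type~$0$ diagram $D$ into $\cat{F}$ by means of the \Phicons, and then recognise the resulting diagram as one of the type~$1$ \comdia s already listed in the definition of $T_1$. Concretely, I would apply construction \ref{cons2.1} to $D$ (this is the construction tailor-made for this shape of diagram). The two lower horizontal maps $b\to c$ occurring in $H_T$ are $g_1$ and $g_2$, and these are exactly the same pair of maps --- with the roles of ``up'' and ``down'' interchanged --- that occur in $H_B$; hence forming the \Phicons\ over $g_i$ yields a single object $p\in\cat{F}$ together with admissible epimorphisms $\eta_i\colon p\to b$, and this $p$ is simultaneously a valid choice for $p_{H_T}$ and for $p_{H_B}$. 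Construction \ref{cons2.1} then produces the \comdia\ $D'$ displayed in its statement, whose top row is $H_T'=\phi(p,H_T)$ and whose bottom row is $H_B'=\phi(p,H_B)$.

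The key observation is that $D'$ is itself a specialization of the second \comdia\ listed in Case~$2$ of definition \ref{def.T}, obtained by taking its left column to be $k=\ker\omega_i$, its middle column $a'\oplus p$, its right column $a$, with the middle rows the evident direct sums and the vertical maps $-p_l,p_r,i_r,i_l$ as in that template. For this to count as a \emph{type~$1$} \comdia\ one must check that the left column lands in $\cat{F}$, and indeed $k$ is the kernel appearing in a \Phicons, so $k\in\cat{F}$ by property~(\ref{prop1}) of theorem \ref{thm.resolution}. Therefore $D'$ imposes in $T_1$ the relation carried by that template, namely $\eqcl{H_T'}_1+\eqcl{H_B'}_1=0$, i.e.\ $\eqcl{\phi(p,H_T)}_1+\eqcl{\phi(p,H_B)}_1=0$.

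The last step is to remove the dependence on $p$: by corollary \ref{cor.choice:independent} the classes $\eqcl{\phi(p,H_T)}_1$ and $\eqcl{\phi(p,H_B)}_1$ do not depend on the chosen auxiliary object, so they coincide with $\eqcl{\phi(H_T)}_1$ and $\eqcl{\phi(H_B)}_1$ respectively. This gives the desired relation $\eqcl{\phi(H_T)}_1+\eqcl{\phi(H_B)}_1=0$ in $T_1$. The argument is almost entirely mechanical once construction \ref{cons2.1} is available; the only point I would expect to have to be careful about is the bookkeeping that certifies $D'$ as an honest specialization of the listed type~$1$ diagram --- in particular that all of its rows and columns are genuine \dses s and that its left column lies in $\cat{F}$ --- and this is exactly what construction \ref{cons2.1} already arranges.
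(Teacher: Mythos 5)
Your proposal is correct and follows essentially the same route as the paper, whose entire proof is to apply construction \ref{cons2.1} to $D$ and read off the relation $\eqcl{\phi(H_T)}_1+\eqcl{\phi(H_B)}_1=0$. The extra bookkeeping you supply --- that the output $D'$ is a specialization of the second type~$1$ template in definition \ref{def.T} because its left column $k$ lies in $\cat{F}$, and that the choice of $p$ is immaterial by corollary \ref{cor.choice:independent} --- is exactly the content the paper leaves implicit.
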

\begin{proof}
  We apply construction \ref{cons2.1} to $D$ to get the relation
    $\eqcl{\phi(H_T)}_1+\eqcl{\phi(H_B)}_1=0$
  as required.
\end{proof}

\subsubsection*{The Relations for $T_1$}
For $T_1$ we need to check the four relations
given by the \comdia s in definition \ref{def.T}.

\begin{Lem}\label{lem.t1.1}
  Let $\eqcl{H_T}-\eqcl{H_M}=\eqcl{V_R}$ be the relation given by the
  type 1
  \comdia\ $D$
\[
  \xymatrix{
      a'\ar@{=}[d]\UP[r]^{f_i'}\DW[r]
        & a\ar@{=}[d]\UP[r]^{g_i'}\DW[r]
          &a''\\
      a'\UP[r]^{f_i}\DW[r]
        &a\UP[r]^{g_i}\DW[r]
          &a''\UP[u]\DW[u]\\
      0\ar[u] & 0\ar[u] &0\ar[u] \Lframe
    }
\]Then $\eqcl{\phi(H_T)}_2-\eqcl{\phi(H_M)}_2=\eqcl{\phi(V_R)}_2$ is
      a relation in $T_2$.
\end{Lem}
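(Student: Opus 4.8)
The plan is to deduce the lemma from Construction \ref{cons2.3}, in much the same way that Lemma \ref{lem.rel.1} was handled. First I would observe that $D$ is a specialization of the input \comdia\ of Construction \ref{cons2.3}: its left-hand and middle vertical maps are identities, so one takes $\beta_i'=\mathrm{id}$ and $\beta_i=\mathrm{id}$, while $\beta_i''$ is the pair of maps forming the right-hand column $V_R$ of $D$. Applying Construction \ref{cons2.3} then produces, for a suitable $p\in\cat{F}$, two \comdia s $D_1$ and $D_2$ with $H_T^1=\phi(p,H_T)$ and $V_R^2=\phi(p,V_R)$, and whose middle row and middle column respectively are one and the same \dses, which I will call $e$.

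Next I would read off the two relations carried by $D_1$ and $D_2$. Since $\beta_i=\mathrm{id}$, the right-hand column $V_R^1$ of $D_1$ is the \dses\ on $(0,a,a)$ whose two copies of the map $a\to a$ are both the identity; it therefore vanishes by relation \ref{rel1}, and $D_1$ gives $\eqcl{\phi(p,H_T)}=\eqcl{e}$. The top \dses\ of $D_2$ is $H_M$ itself and its right-hand column is $\phi(p,V_R)$, so $D_2$ gives $\eqcl{H_M}=\eqcl{\phi(p,V_R)}-\eqcl{e}$. Eliminating $\eqcl{e}$ between these two yields
\[
  \eqcl{\phi(p,H_T)}+\eqcl{H_M}=\eqcl{\phi(p,V_R)}.
\]

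Finally I would substitute the \Phicons\ relation $\eqcl{H_M}=-\eqcl{\phi(p,H_M)}$ into the displayed identity and apply Corollary \ref{cor.choice:independent} to replace each $\eqcl{\phi(p,\cdot)}$ by the choice-independent $\eqcl{\phi(\cdot)}$, arriving at $\eqcl{\phi(H_T)}_2-\eqcl{\phi(H_M)}_2=\eqcl{\phi(V_R)}_2$, which is the asserted relation in $T_2$.

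The step I expect to be the main obstacle is the input to the above bookkeeping rather than the bookkeeping itself. One must check that $e$ really represents $-\eqcl{H_M}$, i.e.\ that $e$ is (up to the choice-independence of Corollary \ref{cor.choice:independent}) the \dses\ $\phi(p,H_M)$: the epimorphisms onto $a''$ produced by the \Phicons\ applied to $H_T$ and to $H_M$ differ by the isomorphisms $\beta_i''$, so these two $\phi$-constructions are genuinely distinct, and it is exactly this discrepancy that produces the nonzero term $\eqcl{\phi(V_R)}$. One must also confirm that each relation invoked above is genuinely a relation in $T_2$ and not merely in $T_1$, i.e.\ that $D_1$ and $D_2$---with their new objects of $\cat{F}$ accounted for---are specializations of the \comdia s listed for $T_2$ in Definition \ref{def.T}. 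These are the delicate points, but they are precisely what Construction \ref{cons2.3} is built to package, so the proof should amount to little more than an appeal to that construction together with relation \ref{rel1}, the \Phicons\ relation, and Corollary \ref{cor.choice:independent}.
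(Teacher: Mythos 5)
Your opening move---applying Construction \ref{cons2.3} to $D$ and identifying the middle row of $D_1$ with the middle column of $D_2$ as a single \dses\ $e$---is exactly how the paper begins. The gap is in the step where you ``read off'' the relations carried by $D_1$ and $D_2$ and then silently promote them to $T_2$. The \comdia\ $D_2$ is the chair-shaped diagram whose top row is $H_M=(a'\to a\to a'')$; since $D$ is only assumed to be of type 1, the object $a$ need not lie in $\cat{F}$, so $D_2$ is a specialization of the chair diagram listed for $T_1$ in Definition \ref{def.T} but \emph{not} of the chair listed for $T_2$, whose frame requires both $a'$ and $a$ to be in $\cat{F}$. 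Worse, $H_M$ is not a type 2 \dses\ at all, so $\eqcl{H_M}_2$ is not an element of $T_2$: the identity $\eqcl{H_M}=\eqcl{\phi(p,V_R)}-\eqcl{e}$, and likewise the \Phicons\ identity $\eqcl{H_M}=-\eqcl{\phi(p,H_M)}$, are relations in $T_1$ only. Your bookkeeping therefore establishes the desired equation in $T_1$, and the unannounced change of subscript from $1$ to $2$ at the end is exactly the point at issue; this is the check you flag as ``delicate,'' and it fails. (The $D_1$ step survives, but not by the route you give: $D_1$ as a diagram is likewise only a $T_1$ specialization, yet because $\beta_i'=\beta_i=1$ commutativity forces $\omega_i=\tilde{\omega}_i$ and $\nu_i=\tilde{\nu}_i$, so $\phi(p,H_T)$ and $e$ are \emph{literally the same} \dses\ and $\eqcl{\phi(p,H_T)}_2=\eqcl{e}_2$ holds in $F_2$ with no relation invoked---which is how the paper uses $D_1$.)

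The paper closes the gap by applying Construction \ref{cons2.2} to $D_2$, which is legitimate since $D_2$ is a specialization of that construction's input shape. This performs one further \Phicons\ and produces a chair diagram whose top row is $\phi(H_T)$, whose right column is $\phi(H_M)$, and whose middle column represents $\phi^2(V_R)$, all of type 2; the chair relation it carries, $\eqcl{\phi(H_T)}_2=\eqcl{\phi(H_M)}_2-\eqcl{\phi^2(V_R)}_2$, is then genuinely a relation in $T_2$. Combining this with $\eqcl{\phi(V_R)}_2=-\eqcl{\phi^2(V_R)}_2$, valid because $\phi(V_R)$ is of type 2, gives the statement. So the missing idea is that one additional application of the \Phicons, via Construction \ref{cons2.2}, is needed to lift the relation coming from $D_2$ from $T_1$ to $T_2$; it is not mere bookkeeping.
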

\begin{proof}
  Apply construction \ref{cons2.3} to $D$ to get two \comdia s $D'$ and $D''$ respectively
  \[
    \xymatrix{
      k_1\UP[r]\DW[r]_-{\nu_i}\ar@{=}[d]
        & a'\oplus p_1 \UP[r]\DW[r]_-{\omega_i}\ar@{=}[d]
          & a\ar@{=}[d]\\
      k_1\UP[r]\DW[r]_-{\tilde{\nu}_i}
        & a'\oplus p_1 \UP[r]\DW[r]_-{\tilde{\omega}_i}
          & a\\
      0\ar[u] &0\ar[u]&0\ar[u]
      \save {"1,1"+<0pt,10pt>}\PATH ~={**\dir{-}}
                  '"1,2"+<9pt,10pt>
                  `r_d
                  '"3,2"+<19pt,20pt>
                  `d^r
                  '"3,3"+<0pt,10pt>
                  `r_d
                  `d_l
                  '"3,1"+<0pt,-10pt>
                  `l_u
                  '"1,1"+<-10pt,0pt>
                  `u_r
  \restore
    }\quad\quad\quad\quad\quad
    \xymatrix{
      a'\UP[r]^{f_i}\DW[r]
        &a\UP[r]^{g_i}\DW[r]
          &a''\\
      a'\ar@{=}[u]\ar[r]
        &a'\oplus p_1\ar[r]\UP[u]^{\tilde{\omega}_i}\DW[u]
          &p_1\UP[u]\DW[u]\\
        &k_1\UP[u]^{\tilde{\nu}_i}\DW[u]\ar@{=}[r]
          &k_1\ar[u] \MedChairframe
    }
  \]
  where $H_T'=\phi(H_T)$ and $V_R''=\phi(V_R)$. The \comdia\ $D'$ tells us that
  $\omega_i=\tilde{\omega}_i$ and $\nu_i=\tilde{\nu}_i$, so when we apply
  construction \ref{cons2.2} to $D''$ we get relation
    $\eqcl{\phi(H_T)}_2=\eqcl{\phi(H_M)}_2-\eqcl{\phi^2(V_R)}_2.$
  Now, as $\phi(V_R)$ is of type $2$
  we know that $\eqcl{\phi(V_R)}_2=-\eqcl{\phi^2(V_R)}_2.$ Hence we get the relation
    $\eqcl{\phi(H_T)}_2-\eqcl{\phi(H_M)}_2=\eqcl{\phi(V_R)}_2$
  as required.
\end{proof}

\begin{Lem}
Let $\eqcl{H_T}+\eqcl{H_B}=0$ be the relation given by the type 1
\comdia\ $D$
\[
  \xymatrix@C=1.1cm@R=1.1cm{
      a'\UP[r]^{f_2}\DW[r]_{f_1}
       &a\UP[r]^{g_2}\DW[r]_{g_1}
         &a''\\
      a'\oplus a'\ar[r]^-{f_1\oplus f_2}
      \DW[u]_{p_r}\UP[u]^{-p_l}
        &a\oplus a\ar[r]^-{g_1\oplus g_2}
        \DW[u]_{p_r}\UP[u]^{-p_l}
          &a''\oplus a''\DW[u]_{p_r}\UP[u]^{-p_l}\\
      a'\UP[r]^{f_1}\DW[r]_{f_2}\DW[u]_{i_l}\UP[u]^{i_r}
       &a\UP[r]^{g_1}\DW[r]_{g_2}\DW[u]_{i_l}\UP[u]^{i_r}
         &a''\DW[u]_{i_l}\UP[u]^{i_r}
      \save {"1,1"+<-21pt,0pt>}\PATH~={**\dir{-}}
            '"3,1"+<-21pt,0pt>
            `d^r
            '"3,1"+<9pt,-10pt>
            `r^u
            '"1,1"+<19pt,0pt>
            `u^l
            '"1,1"+<-11pt,10pt>
            `l^d
      \restore
     }
\]
Then $\eqcl{\phi(H_T)}_2+\eqcl{\phi(H_B)}_2=0$ is a relation in $T_2$.
\end{Lem}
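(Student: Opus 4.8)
The plan is the exact analogue of the $T_0$ version of this relation: apply construction \ref{cons2.1} directly to $D$, the only extra ingredient being a type check. Construction \ref{cons2.1} takes a specialization of the doubling \comdia\ and, after forming the \Phicons\ over the maps $g_i$ to produce an object $p\in\cat{F}$, returns a \comdia\ $D'$ of the same doubling shape whose top and bottom \dses s are $H_T'=\phi(p,H_T)$ and $H_B'=\phi(p,H_B)$. So all of the geometric work is already packaged in that construction; what is left is to confirm that $D'$ is of type $2$, so that the relation it carries holds in $T_2$.

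Since $D$ is of type $1$, the left-hand object $a'$ lies in $\cat{F}$; hence $a'\oplus p\in\cat{F}$, and the kernel object $k$ running along the bottom of $D'$ lies in $\cat{F}$ by property (\ref{prop1}) of theorem \ref{thm.resolution}. Consequently $D'$ is a specialization of the type $2$ doubling \comdia\ of definition \ref{def.T} --- the one in which the first two columns (here $k$ and $a'\oplus p$) are required to lie in $\cat{F}$ --- so $D'$ is of type $2$, and the relation attached to that diagram reads $\eqcl{\phi(p,H_T)}_2+\eqcl{\phi(p,H_B)}_2=0$ in $T_2$.

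Finally, because construction \ref{cons2.1} produces $H_T'$ and $H_B'$ from a single common $p\in\cat{F}$, that $p$ is in particular a valid choice of $p_{H_T}$ and of $p_{H_B}$; corollary \ref{cor.choice:independent} then lets us replace $\phi(p,H_T)$ by $\phi(H_T)$ and $\phi(p,H_B)$ by $\phi(H_B)$, giving $\eqcl{\phi(H_T)}_2+\eqcl{\phi(H_B)}_2=0$ as required. The only step needing any care is the type check on $D'$; everything else is immediate from construction \ref{cons2.1}, so I anticipate no genuine obstacle.
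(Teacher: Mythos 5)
Your proposal is correct and follows the paper's own proof exactly: the paper simply applies construction \ref{cons2.1} to $D$ and reads off the relation $\eqcl{\phi(H_T)}_2+\eqcl{\phi(H_B)}_2=0$. The extra details you supply --- checking that $a'\oplus p$ and $k$ lie in $\cat{F}$ so that $D'$ is of type $2$, and invoking corollary \ref{cor.choice:independent} to pass from $\phi(p,H_T)$ to $\phi(H_T)$ --- are correct and merely make explicit what the paper leaves implicit.
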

\begin{proof}
  We apply construction \ref{cons2.1} to $D$ to get the relation
    $\eqcl{\phi(H_T)}_2+\eqcl{\phi(H_B)}_2=0$
  as required.
\end{proof}

\begin{Lem}
   Let $\eqcl{V_L}-\eqcl{V_M}+\eqcl{V_R}=0$ be the relation given
   by the type 1 \comdia\ $D$
   \[
     \xymatrix{
      a'\ar[r]
        &a\ar[r]
          &a''\\
      a'\UP[u]\DW[u]\ar[r]
        &b\UP[u]\DW[u]\ar[r]
          &b''\UP[u]\DW[u]\\
        &c\UP[u]\DW[u]\ar@{=}[r]
          &c\UP[u]\DW[u]
      \save {"3,2"+<0pt,-10pt>}\PATH~={**\dir{-}}
            '"3,3"+<0pt,-10pt>
            `r^u
            `u^l
            '"3,2"+<0pt,10pt>
            `l^d
            `d^r
      \restore
      }
    \]
    Then the relation
    $\eqcl{\phi(V_L)}_2-\eqcl{\phi(V_M)}_2+\eqcl{\phi(V_R)}_2=0$ is a
    relation in $T_2$.
\end{Lem}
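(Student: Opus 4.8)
The plan is to obtain this relation from construction \ref{cons2}, in the same spirit as the earlier relation-checking lemmas, and then to settle the signs using the independence result, corollary \ref{cor.choice:independent}.

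The first step is to recognise the \comdia\ $D$ as a specialization of the hypothesis of construction \ref{cons2}: one takes the horizontal maps of the middle row to be independent of $i$, so that that row is an ordinary \ses, and matches the ringed object $c$ together with the middle column $0\to c\to b\to a\to 0$ of $D$ with the ``$c$'' and ``$V_M$'' of the construction, the vertical maps of $D$ serving as its ``$\beta_i$'' and ``$\alpha_i$''. Applying construction \ref{cons2} to $D$ then produces a specialization of the \comdia\ $D'$ whose middle row is $H_M'=\phi(p,V_M)$, whose middle column is $V_M'=\phi(p,V_R)$, and whose bottom row satisfies $\eqcl{H_B'}_2=\eqcl{\phi(k^{a''},V_L)}_2$, the last of these holding because $V_L$ is of type $1$.

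The second step is to check that $D'$ is itself a type-$2$ \comdia, namely a specialization of the first of the \comdia s listed in Case 3 of definition \ref{def.T} (the one whose relation reads $\eqcl{H_B}-\eqcl{H_M}=-\eqcl{V_M}$). Its top row is the identity sequence on $b''$ and its left column is the identity sequence on $k^a$, so both of those \dses s are trivial; and its right column $0\to a'\to b\to b''\to 0$ inherits from the middle row of $D$ the property that its two constituent short exact sequences coincide, so it too is an ordinary \ses --- precisely the shape that the Case 3 diagram calls for. The objects one must place in \cat{F} are the ringed ones of that diagram, namely $k^a$, $c\oplus p$ and $k^{a''}$: the object $p$ lies in \cat{F} by construction \ref{consp_d}, and $c$ lies in \cat{F} since it is ringed in $D$, so $c\oplus p\in\cat{F}$ by property (\ref{prop3}) applied to $0\to c\to c\oplus p\to p\to 0$, while $k^a$ and $k^{a''}$ are kernels of admissible epimorphisms out of $p$ and hence lie in \cat{F} by property (\ref{prop1}).

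Therefore $D'$ contributes the relation $\eqcl{H_B'}_2-\eqcl{H_M'}_2=-\eqcl{V_M'}_2$ in $T_2$, that is,
\[
  \eqcl{\phi(k^{a''},V_L)}_2-\eqcl{\phi(p,V_M)}_2=-\eqcl{\phi(p,V_R)}_2 .
\]
By corollary \ref{cor.choice:independent} the class $\eqcl{\phi(p,d)}_2$ of a type-$1$ \dses\ $d$ does not depend on the auxiliary object $p$ chosen, so $\eqcl{\phi(k^{a''},V_L)}_2=\eqcl{\phi(V_L)}_2$, $\eqcl{\phi(p,V_M)}_2=\eqcl{\phi(V_M)}_2$ and $\eqcl{\phi(p,V_R)}_2=\eqcl{\phi(V_R)}_2$; substituting yields $\eqcl{\phi(V_L)}_2-\eqcl{\phi(V_M)}_2+\eqcl{\phi(V_R)}_2=0$, as required. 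I expect the middle step to need the most care: verifying that $D$ genuinely meets the hypothesis of construction \ref{cons2} (that a single-arrow row is a permissible degeneration of a \dses), and that the output $D'$ is no more general than the Case 3 generating diagram it is meant to specialize --- in particular that its right-hand column really does collapse to an honest \ses\ and that every object forced into \cat{F} by that diagram is in fact there.
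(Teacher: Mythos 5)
Your proposal is correct and takes essentially the same route as the paper, whose entire proof is to apply construction \ref{cons2} to $D$ and read off the resulting relation. You have simply made explicit the verifications the paper leaves implicit: that $D$ specializes the input \comdia\ of construction \ref{cons2}, that the output specializes the first Case 3 generating diagram for $T_2$ (with the required objects landing in \cat{F}), and the final appeal to corollary \ref{cor.choice:independent} to remove the dependence on the auxiliary objects.
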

\begin{proof}
    We apply construction \ref{cons2} to $D$ to get the relation
      $\eqcl{\phi(V_L)}_2-\eqcl{\phi(V_M)}_2+\eqcl{\phi(V_R)}_2=0$
    as required.
\end{proof}

\begin{Lem}
  Let $\eqcl{H_T}=\eqcl{V_R}-\eqcl{V_M}$ be the relation given
  by the type 1 \comdia\ $D$
  \[
    \xymatrix@C=1cm{
      a'\UP[r]\DW[r]
        &a\UP[r]\DW[r]
          &a''\\
      a'\ar@{=}[u]\ar[r]_-{\Vmat{1}{0}}
        &a'\oplus p_1\UP[u]\DW[u]\ar[r]_-{\Hmat{0}{1}}
          &p_1\UP[u]\DW[u]\\
        &k_1\UP[u]\DW[u]\ar@{=}[r]
          &k_1\UP[u]\DW[u] \MedChairframe
    }
  \]
  Then $\eqcl{\phi(H_T)}_2=\eqcl{\phi(V_R)}_2-\eqcl{\phi(V_M)}_2$ is a
  relation in $T_2$.
\end{Lem}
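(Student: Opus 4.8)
The plan is to imitate the previous lemmas of this subsection: feed $D$ into construction~\ref{cons2.2} and then tidy up with choice independence and the relation of the \Phicons. First I would spell out what $D$ is. By its shape $D$ is a \Phicons\ diagram: its top row $H_T$ is a type $1$ \dses\ $0\to a'\to a\to a''\to 0$, its middle column is $V_M=\phi(p_1,H_T)$ for the object $p_1\in\cat{F}$ occurring in $D$, and its right column is $V_R$, the \dses\ $0\to k_1\to p_1\to a''\to 0$. Since $a',p_1\in\cat{F}$ we have $a'\oplus p_1\in\cat{F}$ by property~(\ref{prop3}) of Theorem~\ref{thm.resolution}, and then $k_1\in\cat{F}$ by property~(\ref{prop1}); hence $V_M$ is in fact a \dses\ of type $2$, and this is the observation that makes the argument close.

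Next I would apply construction~\ref{cons2.2} to $D$. The \comdia\ $D$ has exactly the input shape of that construction and is of type $j=1$, so it produces a type $2$ \comdia\ $D'$ which is a specialization of the \comdia\ of Case~3 of Definition~\ref{def.T} bearing the relation $\eqcl{H_T}=\eqcl{V_R}-\eqcl{V_M}$. In $D'$ the top row $H_T'$ is the \dses\ $0\to k_1\to a'\oplus p_1\to a\to 0$, i.e. \emph{literally} the middle column $V_M$ of $D$, while $V_R'=\phi(p,H_T)$ and $\eqcl{V_M'}_2=\eqcl{\phi(p,V_R)}_2$ for the object $p$ built inside the construction. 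Thus the relation $D'$ contributes to $T_2$ is $\eqcl{H_T'}_2=\eqcl{V_R'}_2-\eqcl{V_M'}_2$, i.e.
\[
  \eqcl{V_M}_2=\eqcl{\phi(p,H_T)}_2-\eqcl{\phi(p,V_R)}_2 .
\]

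To finish, I would use Corollary~\ref{cor.choice:independent} to rewrite the right-hand side as $\eqcl{\phi(H_T)}_2-\eqcl{\phi(V_R)}_2$, and then apply the defining relation of the \Phicons\ to the type $2$ \dses\ $V_M$ to get $\eqcl{V_M}_2=-\eqcl{\phi(V_M)}_2$. Combining,
\[
  -\eqcl{\phi(V_M)}_2=\eqcl{\phi(H_T)}_2-\eqcl{\phi(V_R)}_2,
\]
that is, $\eqcl{\phi(H_T)}_2=\eqcl{\phi(V_R)}_2-\eqcl{\phi(V_M)}_2$, as required.

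The only care the argument needs is bookkeeping: keeping straight that the relation coming from $D'$ is an identity in $T_2$ rather than $T_1$; checking that $D'$ genuinely is a \emph{type $2$} \comdia\ (this is where $k_1,a'\oplus p_1\in\cat{F}$ is used, so that the top row and middle column of $D'$ qualify as type $2$ data); and noticing that the top row of $D'$ is the \dses\ $V_M$ itself, not merely a \dses\ in its $T_2$-class. Granting those, the proof is a one-line combination of construction~\ref{cons2.2}, choice independence, and the \Phicons\ relation, in the same spirit as Lemmas~\ref{lem.rel.1} and~\ref{lem.t1.1}.
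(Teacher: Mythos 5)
Your proof is correct and follows essentially the same route as the paper's: apply construction~\ref{cons2.2} to $D$, read off the relation $\eqcl{V_M}_2=\eqcl{\phi(H_T)}_2-\eqcl{\phi(V_R)}_2$, and combine it with $\eqcl{V_M}_2=-\eqcl{\phi(V_M)}_2$ coming from the fact that $V_M$ is of type $2$. The extra bookkeeping you supply (why $k_1, a'\oplus p_1\in\cat{F}$, the appeal to Corollary~\ref{cor.choice:independent}) is left implicit in the paper but is exactly the right justification.
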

\begin{proof}
  We apply construction \ref{cons2.2} to $D$ to get the relation
    $\eqcl{V_M}_2=\eqcl{\phi(H_T)}_2-\eqcl{\phi(V_R)}_2.$
  However, since $V_M$ is of type $2$ we know that
  $\eqcl{V_M}_2=-\eqcl{\phi(V_M)}_2$. Thus we get the relation
  $\eqcl{\phi(H_T)}_2=\eqcl{\phi(V_R)}_2-\eqcl{\phi(V_M)}_2$ as required.
\end{proof}

\subsubsection*{The Relations for $T_2$}
For $T_2$ we need to check the relations
given by the \comdia s in definition \ref{def.T}.
\begin{Lem}
  Let $\eqcl{H_B}-\eqcl{H_M}=-\eqcl{V_M}$ be the relation given
  by the type 2 \comdia\ $D$
  \[
    \xymatrix{
        & a\ar@{=}[r]
          &a\\
      c'\ar@{=}[d]\UP[r]\DW[r]
        & b\UP[u]\DW[u]\UP[r]\DW[r]
          & b''\ar[u]\\
      c'\UP[r]\DW[r]
        &c\UP[r]\DW[r]\UP[u]\DW[u]
          &c''\ar[u] \SmallSquareLeftBottomFrame
    }
  \]
  Then  $\eqcl{\phi(H_B)}_3-\eqcl{\phi(H_M)}_3=-\eqcl{\phi(V_M)}_3$ is
  a relation in $T_3$.
\end{Lem}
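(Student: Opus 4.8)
The plan is to apply construction \ref{cons.new} to $D$ and rearrange the resulting relation, just as in the preceding lemmas of this subsection. The hypotheses of construction \ref{cons.new} are met: $D$ is precisely the diagram appearing there, and its bottom row $H_B$, on the objects $c'$, $c$, $c''$, is of type $2$ because $c'$ and $c$ lie inside the ring drawn in $D$, hence in \cat{F}. The construction therefore produces a \comdia\ $D'$ for which $H_T'=\phi(p,V_M)$, $V_R'=\phi(p,H_M)$ and $\eqcl{V_M'}_3=\eqcl{\phi(k^a,H_B)}_3$. Since $k^a\in\cat{F}$ is itself a valid choice of $p_{H_B}$ --- the commutative triangle exhibiting $k^a$ as an admissible epimorphism onto $c''$, built in the proof of construction \ref{cons.new}, witnesses this --- corollary \ref{cor.choice:independent} lets me replace these identities in $T_3$ by
\[
  \eqcl{H_T'}_3=\eqcl{\phi(V_M)}_3,\qquad \eqcl{V_R'}_3=\eqcl{\phi(H_M)}_3,\qquad \eqcl{V_M'}_3=\eqcl{\phi(H_B)}_3.
\]

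Next I would observe that the three remaining edges of $D'$ are split, hence contribute nothing: the middle row $H_M'$ is the canonical split \dses\ on the objects $k^a$ and $c'\oplus p$, while $H_B'$ and $V_L'$ are identity \dses s (on $k^{b''}$ and on $k^a$ respectively). So $\eqcl{H_M'}_3=\eqcl{H_B'}_3=\eqcl{V_L'}_3=0$; here one uses that the objects of $H_M'$ lie in \cat{F}, which holds because \cat{F} is closed under finite direct sums, a consequence of property (\ref{prop3}) of theorem \ref{thm.resolution}. Substituting all of this into the relation furnished by the \comdia\ $D'$,
\[
  \eqcl{H_T'}_3-\eqcl{H_M'}_3+\eqcl{H_B'}_3=\eqcl{V_L'}_3-\eqcl{V_M'}_3+\eqcl{V_R'}_3,
\]
leaves $\eqcl{\phi(V_M)}_3=-\eqcl{\phi(H_B)}_3+\eqcl{\phi(H_M)}_3$, which rearranges to the claimed relation $\eqcl{\phi(H_B)}_3-\eqcl{\phi(H_M)}_3=-\eqcl{\phi(V_M)}_3$ in $T_3$.

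Once construction \ref{cons.new} is invoked the argument is pure bookkeeping, and I anticipate no genuine obstacle. The one point demanding care is matching the three labelled edges $H_T'$, $V_R'$, $V_M'$ to the correct row and column of $D'$ with the correct orientation, so that the signs in the collapsed $3\times 3$ relation come out exactly as above; a sign slip there would flip the conclusion, so one should cross-check it against the already-established $T_2$ relation $\eqcl{H_B}_2-\eqcl{H_M}_2=-\eqcl{V_M}_2$, of which the assertion is the natural $\phi$-image.
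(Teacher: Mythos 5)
Your proposal is correct and follows exactly the paper's route: the paper's entire proof is to apply construction \ref{cons.new} to $D$ and read off $\eqcl{\phi(V_M)}_3=\eqcl{\phi(H_M)}_3-\eqcl{\phi(H_B)}_3$, which is the relation you obtain after collapsing the trivial edges of $D'$. The extra bookkeeping you supply (identifying $H_T'$, $V_R'$, $V_M'$ via corollary \ref{cor.choice:independent} and killing $H_M'$, $H_B'$, $V_L'$ by the equal-maps relation) is exactly what the paper leaves implicit.
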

\begin{proof}
  We apply construction \ref{cons.new} to $D$ to get the relation
    $\eqcl{\phi(V_M)}_3=\eqcl{\phi(H_M)}_3-\eqcl{\phi(H_B)}_3$
  as required.
\end{proof}

\begin{Lem}
  Let $\eqcl{H_T}-\eqcl{H_M}=-\eqcl{V_M}$ be the relation given
  by the type 2 \comdia\ $D$
  \[
    \xymatrix{
      a'\ar@{=}[d]\UP[r]\DW[r]
        & a\UP[r]\DW[r]
          &a''\ar@{=}[d]\\
      a'\UP[r]\DW[r]
        &a\UP[r]\DW[r]\UP[u]\DW[u]
          &a''\\
      0\ar[u] & 0\ar[u] &0\ar[u] \SquareFrame
    }
  \]
  Then $\eqcl{\phi(H_T)}_3-\eqcl{\phi(H_M)}_3=-\eqcl{\phi(V_M)}_3$ is
  a relation in $T_3$.
\end{Lem}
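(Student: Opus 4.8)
\emph{Proof proposal.} The plan is to run, essentially verbatim, the argument of Lemma~\ref{lem.rel.1} — the type~$0$ analogue of this statement, whose generating \comdia\ has exactly the same shape as $D$ — with every subscript raised from $1$ to $3$, supplemented by the extra $\cat{F}$-bookkeeping needed to see that the relations so produced live in $T_3$ and not merely in $T_2$.

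First I would observe that $D$ is a specialization of the \comdia\ forming the hypothesis of Construction~\ref{cons2.3}: take $\beta_i'$ and $\beta_i''$ to be the identities $a'\to a'$ and $a''\to a''$ exhibited in $D$, and take $\beta_i$ to be the pair of vertical maps $a\to a$ in the middle column of $D$. Then the left and right columns of $D$ are the trivial \dses s on $a'$ and $a''$, and the middle column is $V_M$.

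Next I would apply Construction~\ref{cons2.3} to $D$, obtaining the two \comdia s it produces, whose top (resp.\ right) \dses\ is $\phi(p,H_T)$ (resp.\ $\phi(p,V_R)$) for the object $p$ furnished by the \Phicons. Since $D$ is ringed throughout, $a',a,a''\in\cat{F}$; the object $p$ lies in $\cat{F}$ by property~(\ref{prop2}), every kernel occurring in the construction lies in $\cat{F}$ by property~(\ref{prop1}), and every direct sum of objects of $\cat{F}$ lies in $\cat{F}$ by property~(\ref{prop3}). Hence the two output \comdia s are of type~$3$, and reading off their relation exactly as in Lemma~\ref{lem.rel.1} (using Corollary~\ref{cor.choice:independent} to identify $\phi(p,H_T)$ and $\phi(p,H_M)$ with $\phi(H_T)$ and $\phi(H_M)$ in $T_3$) gives
\[
  \eqcl{\phi(H_T)}_3-\eqcl{\phi(H_M)}_3=\eqcl{V_M}_3
\]
in $T_3$. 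Finally, since all three objects of $V_M$ lie in $\cat{F}$, its \Phicons\ \comdia\ is itself of type~$3$ and therefore yields the relation $\eqcl{V_M}_3=-\eqcl{\phi(V_M)}_3$ in $T_3$; substituting this into the display gives $\eqcl{\phi(H_T)}_3-\eqcl{\phi(H_M)}_3=-\eqcl{\phi(V_M)}_3$, as required.

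The step I expect to be the real obstacle is the membership check in the third paragraph: one must confirm that \emph{every} auxiliary object created while running Construction~\ref{cons2.3} on this particular $D$ actually lands in $\cat{F}$, so that the two output \comdia s qualify as type~$3$ (and not merely type~$2$) \comdia s. This is precisely what the ring drawn around all of $D$ in the statement is there to guarantee, and it is the only point at which the passage from the type~$0$ proof to the type~$2$ proof requires anything beyond a change of subscript.
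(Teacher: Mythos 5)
Your argument is correct, but it takes a genuinely different and substantially longer route than the paper. You transplant the proof of Lemma \ref{lem.rel.1}: run Construction \ref{cons2.3} on $D$, verify that every auxiliary object ($p$, $a'\oplus p$, $k$) lands in $\cat{F}$ so that the two output \comdia s are of type $3$, and then assemble $\eqcl{\phi(H_T)}_3-\eqcl{\phi(H_M)}_3=\eqcl{V_M}_3=-\eqcl{\phi(V_M)}_3$. The membership checks you list are the right ones (properties (\ref{prop2}), (\ref{prop1}) and (\ref{prop3}) of Theorem \ref{thm.resolution} in the places you cite them), and the appeal to Corollary \ref{cor.choice:independent} to identify the various $\phi(p,-)$ with $\phi(-)$ is exactly what Lemma \ref{lem.rel.1} does, so nothing is missing. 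The paper, however, dispenses with the \Phicons\ entirely here: since every object of $D$ is ringed, $D$ is \emph{itself} a specialization of the type $3$ \comdia\ in case 4 of Definition \ref{def.T}, so the relation $\eqcl{H_T}_3-\eqcl{H_M}_3=-\eqcl{V_M}_3$ already holds among the un-$\phi$'d classes in $T_3$; and since $H_T$, $H_M$, $V_M$ are each of type $3$, their \Phicons\ \comdia s are type $3$, giving $\eqcl{\phi(X)}_3=-\eqcl{X}_3$ for each, whence the claimed relation by negating both sides. The observation you flag at the end as ``the real obstacle'' --- that the ring around all of $D$ puts everything in $\cat{F}$ --- is in fact the observation that makes Construction \ref{cons2.3} unnecessary altogether; your version buys uniformity with the type $0$ and type $1$ cases at the cost of redoing a construction whose entire purpose is to repair diagrams that are \emph{not} fully ringed.
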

\begin{proof}
  Note that $D$ is also of type 3, hence we have the relation
    $\eqcl{H_T}_3-\eqcl{H_M}_3=-\eqcl{V_M}_3.$
  Also, $H_T,H_M$ and $V_M$ are all of type 3 so we know that
    $\eqcl{\phi(H_T)}_3-\eqcl{\phi(H_M)}_3=-\eqcl{\phi(V_M)}_3$
  as required.
\end{proof}

\begin{Lem}
  Let $\eqcl{H_T}+\eqcl{H_M}=0$ be the relation given by the type
  2 \comdia\ $D$
  \[
    \xymatrix@C=1.5cm@R=1.2cm{
      a'\UP[r]^{f_2}\DW[r]_{f_1}
       &a\UP[r]^{g_2}\DW[r]_{g_1}
         &a''\\
      a'\oplus a'\ar[r]^-{f_1\oplus f_2}
      \DW[u]_{p_r}\UP[u]^{-p_l}
        &a\oplus a\ar[r]^-{g_1\oplus g_2}
        \DW[u]_{p_r}\UP[u]^{-p_l}
          &a''\oplus a''\DW[u]_{p_r}\UP[u]^{-p_l}\\
      a'\UP[r]^{f_1}\DW[r]_{f_2}\DW[u]_{i_l}\UP[u]^{i_r}
       &a\UP[r]^{g_1}\DW[r]_{g_2}\DW[u]_{i_l}\UP[u]^{i_r}
         &a''\DW[u]_{i_l}\UP[u]^{i_r}
      \save {"1,1"+<-22pt,5pt>}\PATH~={**\dir{-}}
            '"3,1"+<-22pt,-5pt>
            `d^r
            '"3,2"+<7pt,-15pt>
            `r^u
            '"1,2"+<17pt,5pt>
            `u^l
            '"1,1"+<-12pt,15pt>
            `l^d
      \restore
     }
  \]
  Then $\eqcl{\phi(H_T)}_3+\eqcl{\phi(H_M)}_3=0$ is a relation in $T_3$.
\end{Lem}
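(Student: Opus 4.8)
The plan is to feed the type 2 twist \comdia\ $D$ into construction \ref{cons2.1}, which is built for exactly this shape. Applied to $D$, it returns a second twist \comdia\ $D'$ one stage further along the resolution, with objects $k$, $a'\oplus p$ and $a$; these all lie in \cat{F}, since $a',a\in\cat{F}$ by hypothesis, $p\in\cat{F}$ by construction \ref{consp_d}, and $k\in\cat{F}$ by property (\ref{prop1}), so $D'$ is a genuine type 3 diagram. The construction moreover identifies the two extreme horizontal \dses s of $D'$ with $\phi(p,H_T)$ and $\phi(p,H_B)$.

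First I would read off the \comdia\ relation carried by $D'$. Each of its three columns is a specialization of the \dses\ appearing in Lemma \ref{nen.lem.pl} (the $-p_l,p_r$ over $i_r,i_l$ pattern), hence contributes $0$ in $T_3$, since Lemma \ref{nen.lem.pl} is available at every type as recorded after Definition \ref{def.T}; and the middle horizontal \dses\ of $D'$ is a direct-sum row whose two short exact sequences coincide, so it too is $0$. Consequently the general relation $\eqcl{H_T}-\eqcl{H_M}+\eqcl{H_B}=\eqcl{V_L}-\eqcl{V_M}+\eqcl{V_R}$ attached to $D'$ collapses to the single identity $\eqcl{\phi(p,H_T)}_3+\eqcl{\phi(p,H_B)}_3=0$ between the classes of its two extreme rows.

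The step I expect to be the genuine obstacle is reconciling this identity with the asserted equation $\eqcl{\phi(H_T)}_3+\eqcl{\phi(H_M)}_3=0$, that is, controlling the class of the \Phicons\ applied to the \emph{middle}, direct-sum row $H_M$ of $D$. The delicate feature is that, although $H_M$ has coincident short exact sequences (so that $\eqcl{H_M}_2=0$), the \Phicons\ does not in general preserve this coincidence: the pullback in construction \ref{consp_d} forces the two comparison maps $\eta_1,\eta_2$ apart, so $\phi(H_M)$ is an honest type 3 \dses. I would therefore use choice-independence (Corollary \ref{cor.choice:independent}) to normalise the pullback data defining $\phi(H_M)$, and then build an auxiliary type 3 \comdia\ identifying $\eqcl{\phi(H_M)}_3$ with the remaining class $\eqcl{\phi(p,H_B)}_3$, absorbing the direct-sum bookkeeping through the coincident-SES relation and the twist relation in $T_3$. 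Feeding this identification into the collapsed relation from $D'$ then yields $\eqcl{\phi(H_T)}_3+\eqcl{\phi(H_M)}_3=0$, as required.
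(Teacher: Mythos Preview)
Your first two paragraphs are exactly the paper's proof: apply Construction \ref{cons2.1} to $D$, observe that the output $D'$ is a type 3 twist diagram with $H_T'=\phi(p,H_T)$ and $H_B'=\phi(p,H_B)$, and read off $\eqcl{\phi(H_T)}_3+\eqcl{\phi(H_B)}_3=0$. The paper's proof is literally the single sentence ``apply Construction \ref{cons2.1}''.

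Your third paragraph is chasing a phantom. The ``$H_M$'' in the lemma statement (and in the paper's one-line proof) is a typo for ``$H_B$''. Compare the relation recorded under this same diagram in Definition \ref{def.T}, Case 3, where it reads $[H_T]+[H_B]=0$; and compare the parallel lemmas for the type 0 and type 1 versions of this diagram, both of which are stated and proved with $H_B$. The middle row $H_M$ of $D$ has equal top and bottom maps and plays no role in the relation being transported. With the typo corrected there is no ``genuine obstacle'': the auxiliary identification you propose between $\eqcl{\phi(H_M)}_3$ and $\eqcl{\phi(p,H_B)}_3$ is simply not needed.
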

\begin{proof}
  We apply construction \ref{cons2.1} to $D$ to get the relation
    $\eqcl{\phi(H_T)}_3+\eqcl{\phi(H_M)}_3=0$
  as required.
\end{proof}

\begin{Lem}
Let $\eqcl{V_M}=0$ be the relation given by the \comdia\ $D$
\[
  \xymatrix@C=1.1cm{
     a\ar@{=}[r]
       & a&\\
     a\ar[r]_-{\left(\begin{smallmatrix}-1\\1\end{smallmatrix}\right)}
     \ar@{=}[u]
       &a\oplus
       a\UP[u]^{-p_l}\DW[u]_{p_r}\ar[r]^-{\left(\begin{smallmatrix}1&1\end{smallmatrix}\right)}
         &a\ar@{=}[d]\\
       &a\ar@{=}[r]\UP[u]^{i_r}\DW[u]_{i_l}
         &a \SquareFrame
  }
\]
Then $\eqcl{\phi(V_M)}_3=0$ is a relation in $T_3$.
\end{Lem}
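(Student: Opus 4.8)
The plan is to use that $V_M$ is, on the nose, the \dses\ of Lemma \ref{nen.lem.pl} and that all of its objects, being ringed in $D$, lie in \cat{F}. First I would observe that, since $a\in\cat{F}$, property (\ref{prop3}) applied to $0\to a\to a\oplus a\to a\to 0$ gives $a\oplus a\in\cat{F}$; thus $V_M$ is in fact a type 3 \dses, so the \comdia\ used in the proof of Lemma \ref{nen.lem.pl} is here of type 3 and ``Lemma \ref{nen.lem.pl} revisited'' gives $\eqcl{V_M}_3=0$.

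Next I would form the \Phicons\ over the morphisms $g_i$ of $V_M$, obtaining $p=p_{V_M}\in\cat{F}$ and the usual \comdia\ whose top row is $V_M$ and whose middle column is $\phi(V_M)$. The key point is that every object appearing in this \comdia\ --- namely $a$, $a\oplus a$, $p$, $a\oplus p$ and $k=\ker\omega_i$ --- lies in \cat{F}: the two direct sums by property (\ref{prop3}), and $k$ by property (\ref{prop1}). Hence the \comdia\ is a specialization of the type 3 template of Case 4 of Definition \ref{def.T} and therefore yields the full $3\times 3$ relation in $T_3$.

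It then remains to collapse that relation. Its bottom row and its left column are degenerate, its middle row is the doubled split sequence $0\to a\to a\oplus p\to p\to 0$, and --- the one point that warrants a sentence --- its right column $0\to k\to p\to a\to 0$ is also doubled, since the two composites $g_1\eta_1$ and $g_2\eta_2$ coincide (they are the single top edge of the commutative triangle furnished by construction \ref{consp_d}). Relation (\ref{rel1}) then kills all four of these classes, so the $3\times 3$ relation reduces to precisely $\eqcl{\phi(V_M)}_3=-\eqcl{V_M}_3$ --- the same computation that appears in the proof of the \Phicons, now read in $T_3$ rather than in $T_j$. Combining this with $\eqcl{V_M}_3=0$ from the first step gives $\eqcl{\phi(V_M)}_3=0$, as required.

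The only real obstacle is bookkeeping in the middle step: one must verify that the \comdia\ coming out of the \Phicons\ genuinely qualifies as a type 3 \comdia, even though it was built from the (nominally type 2) datum $V_M$, and that its right-hand column is a doubled \dses\ rather than a genuine non-trivial one. Everything else is immediate.
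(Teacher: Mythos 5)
Your argument is correct and is essentially the paper's own proof unfolded in detail: the paper likewise first notes that $D$ (equivalently, the diagram behind Lemma \ref{nen.lem.pl}) is of type 3 because all its objects lie in \cat{F}, giving $\eqcl{V_M}_3=0$, and then uses that $V_M$ is itself a type 3 \dses\ so that the \Phicons\ relation $\eqcl{\phi(V_M)}_3=-\eqcl{V_M}_3$ already holds in $T_3$. Your extra bookkeeping (closure of \cat{F} under $\oplus$, membership of $k$, and the degeneracy of the right column via $g_1\eta_1=g_2\eta_2$) is exactly what the paper leaves implicit.
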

\begin{proof}
  The \comdia\ $D$ is also of type 3, therefore we have the
  relation $\eqcl{V_M}_3=0$.
  However, $V_M$ is of type 3 thus we know that
  $\eqcl{\phi(V_M)}_3=0$ as required.
\end{proof}

\begin{Lem}\label{lem.rel.end}
  Let $\eqcl{H_T}=\eqcl{V_R}-\eqcl{V_M}$ be the relation given
  by the type 2 \comdia\ $D$
  \[
    \xymatrix{
      a'\UP[r]\DW[r]
        &a\UP[r]\DW[r]
          &a''\\
      a'\ar@{=}[u]\ar[r]
        &a'\oplus p_1\UP[u]\DW[u]\ar[r]
          &p_1\UP[u]\DW[u]\\
        &k_1\UP[u]\DW[u]\ar@{=}[r]
          &k_1\UP[u]\DW[u] \Chairframe
    }
  \]
  Then
  $\eqcl{\phi(H_T)}_3=\eqcl{\phi(V_R)}_3-\eqcl{\phi(V_M)}_3$ is a
  relation in $T_3$.
\end{Lem}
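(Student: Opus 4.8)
The plan is to follow, almost verbatim, the strategy used for the analogous type $1$ lemma: the whole argument is a single application of construction \ref{cons2.2}. The \comdia\ $D$ of the lemma has exactly the shape produced by the \Phicons --- its top row $H_T$ is the original \dses, its middle row $H_M$ is split, and its middle column $V_M$ is the \dses\ $\phi(p_1,H_T)$ --- so construction \ref{cons2.2} applies to it with $j=2$.

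First I would feed $D$ into construction \ref{cons2.2}. Because $D$ is of type $2$, every object of the resulting \comdia\ $D'$ lies in $\cat{F}$, so $D'$ is of type $3$; moreover its top row $H_T'$ is the \dses\ $V_M$ of $D$, its right column satisfies $V_R'=\phi(p,H_T)$, and it carries the identity $\eqcl{V_M'}_3=\eqcl{\phi(p,V_R)}_3$. Reading off the type $3$ relation attached to $D'$, noting that the split sequences $V_L'$, $H_M'$ and $H_B'$ all represent $0$, and using the choice-independence of corollary \ref{cor.choice:independent} to identify $\eqcl{\phi(p,H_T)}_3$ with $\eqcl{\phi(H_T)}_3$ and $\eqcl{\phi(p,V_R)}_3$ with $\eqcl{\phi(V_R)}_3$, I obtain the relation $\eqcl{V_M}_3=\eqcl{\phi(H_T)}_3-\eqcl{\phi(V_R)}_3$ in $T_3$. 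Next, since $D$ is of type $2$, all three objects of the \dses\ $V_M$ lie in $\cat{F}$, so $V_M$ is of type $3$; hence the \comdia\ used to carry out the \Phicons\ on $V_M$ is itself of type $3$ and gives $\eqcl{V_M}_3=-\eqcl{\phi(V_M)}_3$. Substituting this into the previous equation yields exactly $\eqcl{\phi(H_T)}_3=\eqcl{\phi(V_R)}_3-\eqcl{\phi(V_M)}_3$, which is the required relation in $T_3$.

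I do not expect a genuine obstacle here, since construction \ref{cons2.2} has already been established and the rest is bookkeeping. The one point that needs checking --- and the only place where this case genuinely differs from the type $1$ case --- is the tally of types: that $D'$ is of type $3$ rather than only type $2$, that $V_M$ is of type $3$ (so that $\eqcl{V_M}_3=-\eqcl{\phi(V_M)}_3$ is a legitimate relation in $T_3$), and that $H_T$ and $V_R$ remain merely of type $2$ (so that $\eqcl{\phi(H_T)}_3$ and $\eqcl{\phi(V_R)}_3$ denote the intended classes). All of this follows by inspecting which objects the frame of the given type $2$ \comdia\ places in $\cat{F}$, together with the closure properties (\ref{prop1}) and (\ref{prop3}) of $\cat{F}$.
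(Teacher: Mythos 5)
Your proposal is correct and follows exactly the paper's own argument: a single application of construction \ref{cons2.2} to $D$ yields $\eqcl{V_M}_3=\eqcl{\phi(H_T)}_3-\eqcl{\phi(V_R)}_3$, and since $V_M$ is of type $3$ one has $\eqcl{V_M}_3=-\eqcl{\phi(V_M)}_3$, giving the stated relation. The extra bookkeeping you supply (checking that $D'$ is of type $3$, that the split rows and columns contribute $0$, and invoking choice-independence) is consistent with, and slightly more explicit than, the paper's proof.
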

\begin{proof}
  We apply construction \ref{cons2.2} to $D$ to get the relation
    $\eqcl{V_M}_3=\eqcl{\phi(H_T)}_3-\eqcl{\phi(V_R)}_3,$
  but $V_M$ is of type 3 and so we know that
    $\eqcl{\phi(V_R)}_3-\eqcl{\phi(V_M)}_3=\eqcl{\phi(H_T)}_3$
  as required.
\end{proof}

\subsubsection*{The Proof of the Key Lemma}

  The proof of the key lemma, lemma \ref{lemmas.cor}, follows from the
  comments at the beginning of this section
  and from lemmas \ref{lem.rel.1} to \ref{lem.rel.end}.

\bibliographystyle{plain}
\bibliography{Main}

\end{document}